\newtheorem{theorem}{Theorem}[section]
\newtheorem{lemma}[theorem]{Lemma}
\newtheorem{definition}[theorem]{Definition}
\newtheorem{proposition}[theorem]{Proposition}
\newtheorem{corollary}[theorem]{Corollary}
\newtheorem{remark}[theorem]{Remark}
\newtheorem{question}[theorem]{Question}
\newtheorem{conjecture}{Conjecture}
\numberwithin{equation}{section}
\numberwithin{figure}{section}
\newcommand{\R}{\mathbb{R}}
\newcommand{\C}{\mathbb{C}}
\renewcommand{\div}{\mathrm{div}}
\newcommand{\mx}{\mu^\xi}
\def\intave#1{\int_{#1}\hbox{\llap{$\raise2.3pt\hbox{\vrule
				height.9pt width7pt}\phantom{\scriptstyle{#1}}\mkern-2mu$}}}
\title{A FLOW APPROACH TO THE MONOTONICITY OF SHAPE FUNCTIONALS}
\author{Yong Huang}
\address{School of Mathematics, Hunan University, Changsha, Hunan, China.}
\email{huangyong@hnu.edu.cn}
\author{Qinfeng Li}
\address{School of Mathematics, Hunan University, Changsha, P.R. China.}
\email{liqinfeng1989@gmail.com}
\author{Shuangquan Xie}
\address{School of Mathematics, Hunan University, Changsha, P.R. China.}
\email{xieshuangquan@hnu.edu.cn}
\author{Hang Yang}
\address{School of Mathematics, Hunan University, Changsha, P.R. China.}
\email{hangyang0925@gmail.com}
\thanks{Research of Yong Huang is supported by the National Natural Science Foundation of China (No. 12171144 and No. 12231006).Research of Qinfeng Li was supported by National Natural Science Fund of China for Excellent Young Scholars (No. 12522109), National Key R\&D Program of China (No. 2022YFA1006900) and the National Science Fund of China General Program (No. 12471105). Research of Shuangquan Xie is supported by the National Science fund of China for Youth Scholars (No.12401198) and the Changsha Natural Science Foundation (No. KQ2208006).}
\begin{document}
	\maketitle	
\begin{abstract}
		We develop a geometric flow framework to investigate two classical shape functionals: the torsional rigidity and the first Dirichlet eigenvalue of the Laplacian. First, by constructing novel deformation paths governed by height-stretching flows, leg-stretching flows, and angle-bisector flows, we prove new monotonicity properties for these functionals under deformations of triangles and rhombuses. These results also lead to new and simpler proofs of some known results, without using the Steiner symmetrization argument. Second, we introduce a mean curvature flow approach to the Saint-Venant inequality, providing a new geometric proof for smooth convex domains. We establish a weak monotonicity property along the flow and characterize the equality case, which leads to the discovery of an intriguing new functional whose extremal properties suggest a further conjecture. Third, by discovering a gradient norm inequality for the sides of rectangles, we prove monotonicity and rigidity results of the torsional rigidity on rectangles. 
\end{abstract}
	
	\section{Introduction}
	\subsection*{Motivation and Background}
	A fundamental question in shape optimization concerns the identification of maximizers and minimizers of shape functionals—when such extrema exist. From a practical standpoint, however, a theoretically optimal shape may not always be feasible or appropriate for implementation. Often, it is necessary to select the best solution from a prescribed set of admissible shapes, typically choosing one that approximates the optimal shape as closely as possible. This motivates the following two questions, which we consider to be of substantial importance, both theoretically and in application.
	\begin{question}
		\label{kq1}
		Suppose the maximizer (or minimizer) of a given shape functional under appropriate constraints is unique. In what sense is it true that the closer a shape is to this maximizer (or minimizer), the larger (or smaller) the value the shape functional attains on that shape?
	\end{question}
	
	\begin{question}
		\label{kq2}
		In the absence of optimality and direct computability, how can one effectively compare the values attained by a shape functional on two generic regions?
	\end{question}
	
	
	One natural approach to address the two questions is via the continuous deformation method. More specifically, if there is a continuous deformation that transforms a given shape into the optimal one, and during this evolution the shape becomes strictly closer to the optimal one, then Question \ref{kq1} becomes the problem of studying the monotonicity of the shape functional along the continuous deformation. For Question \ref{kq2}, if neither shape is optimal, the idea is to find a continuous deformation that transforms one shape into the other, while ensuring that the shape functional is monotone along this continuous deformation. Once such a continuous deformation is found, Question \ref{kq2} is then answered. 
	
	Therefore, both questions above require us to study the monotonicity property of shape functionals under suitable continuous deformations. However, to the best of our knowledge, even for very classical functionals such as the torsional rigidity and the first eigenvalue of the Dirichlet Laplacian, the monotonicity has been less explored along other continuous deformations except through the application of continuous symmetrization arguments.
	
	The continuous symmetrization method, introduced by P\'olya and Szeg\"o in \cite{PS51} and later developed into various versions by Solynin \cite{Solynin92} and Brock \cite{Brock95, Brock00}, has demonstrated remarkable effectiveness in addressing shape optimization problems, particularly in the context of planar domains, as exemplified by Laugesen and Siudeja in \cite[Chapter 6]{nshap}. However, concerning Question \ref{kq1}, there are in general many continuous deformation paths that can gradually transform a given region into the optimal one, and such paths may not be recovered through traditional continuous symmetrization procedures. 
	
	Therefore, we propose to utilize the flow method, which not only naturally handles continuous evolution and transforms the monotonicity investigation into the investigation of the sign of shape derivatives along the flow, but also eliminates the reliance on symmetrization and provides diverse optimization strategies. 
	
	In the present paper, we restrict the applications of the flow approach to the following two classical shape functionals: the torsional rigidity and the first eigenvalue of the Dirichlet Laplacian. Recall that the torsional rigidity of a bounded Lipschitz domain $\Omega \subset \mathbb{R}^n$ can be equivalently defined as
	\begin{align*} T(\Omega):=\int_\Omega u_\Omega\, dx,
	\end{align*}
	where $u_\Omega \in H_0^1(\Omega)$ is the weak solution of the torsion equation 
	\begin{align*}
		\begin{cases}
			-\Delta u=1\quad &\mbox{in $\Omega$,}\\
			u=0\quad &\mbox{on $\partial \Omega$.}
		\end{cases}
	\end{align*}
	The first eigenvalue of the Dirichlet Laplacian of a bounded Lipschitz domain $\Omega$ is defined as
	\begin{align*}
		\lambda_1(\Omega):=\inf\left\{\int_\Omega |\nabla v|^2: v\in H_0^1(\Omega), \int_\Omega v^2\, dx=1\right\}.
	\end{align*}
	
	The most fundamental results regarding the two functionals are the Saint-Venant inequality and the Faber-Krahn inequality, indicating that balls uniquely maximize $T(\cdot)$ while minimizing $\lambda_1(\cdot)$, among domains with fixed volume. The stability of these two inequalities is also obtained in the seminal paper \cite{B15}, where a sharp quantitative form is given. Another important topic is the discrete optimization problem, and it is conjectured by P\'olya and Szeg\"o that, among polygons with $N$ sides and fixed area, the regular polygon uniquely maximizes $T(\cdot)$ and minimizes $\lambda_1(\cdot)$. When $N=3, 4$, this is proved in \cite[Section 7.4]{PS51} via a combination of the two properties: (i) The $T(\cdot)$ is increasing and $\lambda_1(\cdot)$ is decreasing under Steiner symmetrizations; (ii) for any triangle or quadrilateral, one can find a sequence of its Steiner symmetrizations which converge to an equilateral triangle or to a square, respectively. When $N\ge 5$, a theoretical proof of the conjecture remains open, and recent advances can be seen in \cite{Bucur}, \cite{BB24}, \cite{DG} and \cite{Indrei}. The difficulty lies in the fact that the number of sides can in general increase after symmetrization, and this actually also motivates us to construct new monotonic deformation paths, which do not rely on symmetrization, to study shape optimization problems. For a comprehensive introduction of known results and methods on these two functionals, we refer to the monographs \cite{Henrot} and \cite{nshap}.
	
	In this paper, we shall apply the flow method to study the two functionals and prove new results motivated from Questions \ref{kq1}-\ref{kq2}. Our main results are described as follows.

	\subsection{Monotonicity along stretching flows and angle-bisector flows}
	
	Concerning Question \ref{kq1}, we begin by focusing on triangles, which represent an important class of planar domains. The geometry of triangles is simple, yet many profound results have been established on them, and results on triangles often inspire further important results on generic shapes.  The literature is growing very fast, and we do not attempt to list all references. For a good survey, we refer to \cite[Section 6]{nshap}.
	
	Triangles serve as an ideal model case for studying Question \ref{kq1} via the flow approach, because their concise geometric properties make it more convenient for us to choose continuous deformations that are not based on symmetrizations, but still strictly and progressively transform an arbitrary non-equilateral triangle into an equilateral one. In fact, if a given triangle $\triangle_{ABC}$ is not equilateral, then it must belong to one of the following two cases:
	
	\begin{enumerate}
		\item It has exactly one shortest height.
		\item It has exactly two shortest heights, that is, it is a sub-equilateral triangle (an isosceles triangle with the aperture less than $\pi/3)$. 
	\end{enumerate} 
	
	For the first case, a natural continuous deformation without relying on symmetrization consists in lengthening the shortest height until the triangle becomes either equilateral or sub-equilateral. Then, during the process, the triangle is intuitively closer to an equilateral shape. Concerning Question \ref{kq1} along this continuous deformation, the following theorem gives a positive answer.
	\begin{theorem}
		\label{yangsheng2}
		Let $\Omega=\triangle_{ABC}$ be a triangle, with $AB$ being the longest side and lying on the $x$-axis, and with $C$ lying on the positive $y$-axis. Suppose that $|AB|>|BC|\ge |AC|$, and let $C_t=(1+t)C$. Let $t_1>0$ be the unique number such that $|AB|=|BC_{t_1}|$. Then, for any $t\in (-1,t_1)$,
		\begin{align*}
			\frac{d}{dt}\left(\frac{T(\triangle_{ABC_t})}{|\triangle_{ABC_t}|^2}\right)>0,\quad \frac{d}{dt}\left(\lambda_1(\triangle_{ABC_t})|\triangle_{ABC_t}|\right)<0.
		\end{align*}
	\end{theorem}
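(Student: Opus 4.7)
The plan is to apply Hadamard shape derivatives together with the Pohozaev identity to reduce both conclusions to a single gradient-norm inequality on $\Omega_t$, and then prove that inequality. Set $s=1+t$ and parametrize $A=(-a,0)$, $B=(b,0)$ with $0<a\le b$, and $C_t=(0,sh)$; the range $t\in(-1,t_1)$ translates into $sh<\sqrt{a(a+2b)}=:s_1 h$, i.e., $|AB|$ is strictly the longest side of $\Omega_t$. The stretching flow is driven by the Eulerian velocity field $V(x,y)=(0,y/(1+t))$ on $\Omega_t$, under which $\frac{d}{dt}|\Omega_t|=|\Omega_0|$, hence $\frac{d}{dt}|\Omega_t|^2=2|\Omega_t|^2/(1+t)$. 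Hadamard's formulas for the torsion function $u_t$ and the $L^2$-normalized first Dirichlet eigenfunction $\phi_t$ read
\begin{align*}
\frac{d}{dt}T(\Omega_t)=\int_{\partial\Omega_t}(V\cdot\nu)|\nabla u_t|^2\,d\Haus{1},\qquad \frac{d}{dt}\lambda_1(\Omega_t)=-\int_{\partial\Omega_t}(V\cdot\nu)|\nabla\phi_t|^2\,d\Haus{1}.
\end{align*}

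Next I would apply the Pohozaev identity with the multiplier $X=(0,y)$. Testing against $-\Delta u_t=1$ and $-\Delta\phi_t=\lambda_1\phi_t$, and using $u_t=\phi_t=0$ on $\partial\Omega_t$, produces
\begin{align*}
\int_{\partial\Omega_t}y\nu_y|\nabla u_t|^2\,d\Haus{1}=T(\Omega_t)+2\int_{\Omega_t}(\partial_y u_t)^2,\qquad \int_{\partial\Omega_t}y\nu_y|\nabla\phi_t|^2\,d\Haus{1}=2\int_{\Omega_t}(\partial_y\phi_t)^2.
\end{align*}
Substituting these into the Hadamard formulas, together with $T(\Omega_t)=\int|\nabla u_t|^2$ and $\lambda_1(\Omega_t)=\int|\nabla\phi_t|^2$, and simplifying, I obtain
\begin{align*}
\frac{d}{dt}\!\left(\frac{T(\Omega_t)}{|\Omega_t|^2}\right)&=\frac{1}{(1+t)|\Omega_t|^2}\!\left[\int_{\Omega_t}(\partial_y u_t)^2-\int_{\Omega_t}(\partial_x u_t)^2\right],\\
\frac{d}{dt}\bigl(\lambda_1(\Omega_t)|\Omega_t|\bigr)&=\frac{|\Omega_t|}{1+t}\!\left[\int_{\Omega_t}(\partial_x\phi_t)^2-\int_{\Omega_t}(\partial_y\phi_t)^2\right].
\end{align*}
Both conclusions of the theorem are therefore equivalent to the single gradient-norm inequality
\begin{align*}
\int_{\Omega_t}(\partial_y w)^2>\int_{\Omega_t}(\partial_x w)^2,\qquad t\in(-1,t_1),\ w\in\{u_t,\phi_t\}.
\end{align*}

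The main obstacle is proving this last inequality from the geometric constraint $sh<s_1 h$. Using Pohozaev once more, now with $X=(x,0)$, the inequality is equivalent to positivity of the boundary integral $\int_{\partial\Omega_t}(y\nu_y-x\nu_x)|\nabla w|^2\,d\Haus{1}$. A direct calculation shows its integrand vanishes identically on the base $AB$ (where $y=0$ and $\nu_x=0$), equals $a(2y-sh)/|AC_t|$ on $AC_t$, and equals $b(2y-sh)/|BC_t|$ on $BC_t$. Since the weight $(2y-sh)$ flips sign at $y=sh/2$, the required positivity is a quantitative bias statement: the values of $|\nabla w|^2$ on the upper halves of the two slanted sides (near $C_t$) must outweigh those on their lower halves (near $A,B$). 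I would first handle the two extreme cases: in the thin-triangle limit $t\to -1^+$ via the slab expansion $w\approx\tfrac12 y(H(x)-y)$, which gives $|\nabla w|^2\sim y^2$ along the slanted boundary and hence strict positivity; and at $t=t_1$, where the triangle is isosceles and carries a reflection symmetry through $B$, by applying Pohozaev with the rotational multiplier $X=(y,x)$ to rewrite the target quantity as a positive multiple of the cross-integral $\int\partial_x w\,\partial_y w$, which can be analyzed through the reflection symmetry. The technical crux, where I expect the real work to lie, is interpolating between these extremes and ruling out any sign change along the flow on $(-1,t_1)$; the natural route is to pull back to the reference triangle $\Omega_0$ via $(x,y)\mapsto(x,(1+t)y)$ so that $w$ becomes a solution of the modified PDE $-\partial_{xx}v-(1+t)^{-2}\partial_{yy}v=1$ (or the analogous eigenvalue problem) on the fixed domain $\Omega_0$, and then establish monotonicity in $t$ of the normalized boundary integral by differentiating this PDE in the parameter $t$.
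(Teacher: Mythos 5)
Your reduction is correct, and in fact it lands exactly where the paper's proof does: the quantity $\int_{\partial\Omega_t}\bigl(y\nu_y-x\nu_x\bigr)|\nabla w|^2\,d\sigma$ is precisely the boundary integral $\int_{\partial\Omega_t}|\nabla w|^2\,(-x,y)\cdot\nu\,d\sigma$ that the paper extracts from the shape derivative combined with the Pohozaev identity \eqref{boundarytorsion}--\eqref{boundarylambda1}, and your side-by-side computation of the integrand (weight proportional to $2y-sh$ on each slanted side, vanishing at the midpoints) agrees with the paper's weights $y-x\tan\beta_t$ and $y+x\tan\gamma_t$. The genuine gap is that you never prove the positivity of this boundary integral, which is the heart of the theorem. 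You correctly identify that one must show $|\nabla w|^2$ at a point of the upper half of each slanted side dominates its value at the mirror point on the lower half, but what you offer is two limit cases ($t\to-1^+$ by a slab heuristic, and $t=t_1$, which is not even in the open range $(-1,t_1)$) plus a plan to ``interpolate'' by pulling back to $\Omega_0$ and differentiating the anisotropic equation $-\partial_{xx}v-(1+t)^{-2}\partial_{yy}v=1$ in $t$. That last step is not an argument: ruling out a sign change of the boundary functional along the flow would require controlling the $t$-derivative of boundary gradient quantities, which is at least as delicate as the original claim, and nothing in the sketch produces the needed monotonicity in $t$.

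The missing idea is a pointwise comparison valid at each fixed $t$, so that no interpolation in $t$ is needed at all. Since $|AB|$ is strictly the longest side for every $t\in(-1,t_1)$ (in your notation $s^2h^2<a(a+2b)\le b(b+2a)$, i.e. $|AC_t|\le|BC_t|<|AB|$), the reflection of the sub-triangle cut off by the perpendicular bisector of $BC_t$ on the $B$-side lies strictly inside $\Omega_t$, and the moving-plane/Hopf argument (the paper's Theorem \ref{symmetrycomparison}) yields $|\nabla w|(p')>|\nabla w|(p)$ whenever $p$ lies on the lower half of $BC_t$ and $p'$ is its mirror image about the midpoint; the analogous statement holds on $AC_t$ using $|AB|>|BC_t|$. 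Pairing mirror points and using the antisymmetry of the weight $2y-sh$ about the midheight $y=sh/2$ then gives strict positivity of each of the two side integrals for every $t\in(-1,t_1)$, and this closes your argument; the same comparison applies verbatim to the first Dirichlet eigenfunction (here $-\Delta\phi_t=\lambda_1\phi_t$ with $\phi_t>0$), which settles the $\lambda_1$ statement as well. With this lemma inserted, your proof and the paper's are essentially the same; your route through the bulk identities $\int_{\partial\Omega_t} y\nu_y|\nabla u_t|^2=T+2\int(\partial_yu_t)^2$ etc.\ is a clean equivalent repackaging, but it does not by itself replace the reflection step.
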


	Now let $\Omega_t=\sqrt{\tfrac{|OC|}{|OC_t|}}\triangle_{ABC_t}=\triangle_{A_tB_t\tilde{C}_t}$, where $O$ is the origin. Now $\Omega_t$ has the same area of $\Omega$. Then an equivalent form of Theorem \ref{yangsheng2} states that for $t\in (-1,t_1)$,
	$\tfrac{d}{dt}T(\Omega_t)>0$, and $\tfrac{d}{dt}\lambda_1(\Omega_t)<0$. See Figure \ref{fig:ctm1} below. 
	
	For the second case, the triangle is isosceles with exactly one tallest height. The next theorem below, which is slightly more general, implies that during the process of compressing the tallest height, and before all three heights become equal, $T(\cdot)/|\cdot|^2$ is strictly increasing, and $\lambda_1(\cdot)|\cdot|$ is strictly decreasing. This was actually first proved in \cite[Theorem 1.3]{SB10} via Steiner symmetrization and continuous Steiner symmetrization argument. Our proof is more direct, and also implies a rigidity property under the transformation.
	
	\begin{theorem}
		\label{yangsheng2'}
		Let $\Omega=\triangle_{ABC}$ be a non-obtuse triangle with $AB$ being the shortest side and lying on the $x$-axis, and with $C$ lying on the positive $y$-axis. Suppose that $|AB|<|BC|\le  |AC|$, and let $C_t=(1-t)C$. Let $t_2>0$ be the unique number such that $|AB|=|BC_{t_2}|$.    Then, for $t\in (-\infty,t_2)$, \begin{align*}
			\frac{d}{dt}\left(\frac{T(\triangle_{ABC_t})}{|\triangle_{ABC_t}|^2}\right)>0,\quad \frac{d}{dt}\left(\lambda_1(\triangle_{ABC_t})|\triangle_{ABC_t}|\right)<0.
		\end{align*}
	\end{theorem}
	
	\begin{figure}[htp]
		\centering
		\begin{tikzpicture}[scale = 2.5]
			
			\fill (-0.4, 0) circle (0.02 ) node[below ] {\small$A$};
			\fill (0,1) circle (0.02 ) node[right]{\small $C_{t_1}$};
			\fill (0, 0) circle (0.02 ) node[below ] {\small$O$};
			\fill (1.2, 0) circle (0.02 ) node[below ] {\small$B$};
			\fill (0, 0.5) circle (0.02 ) node[right] {\small$C$};
			\fill (0,0.7) circle (0.02 ) node[right] {\small$C_{t}$};

			\draw[->] (-0.6,0)--(1.5,0) node[right] {$x$};
			\draw[->] (0,-0.2)--(0,1.3) node[right] {$y$};
			
			\draw (0,0)--(1.2,0);
			\draw[thick,dotted] (0,0.7)--(1.2,0);
			\draw[thick,dotted] (0,0.7)--(-0.4,0);
			\draw[thick,dotted] (0,1)--(-0.4,0);
			\draw[thick,dotted] (0,1)--(1.2,0);
			\draw (-0.4,0)--(0,0.5);
			\draw (1.2,0)--(0,0.5);

			\node at (0.3,0.2) {\small $\Omega$};
			
			\draw[->,thick] (1.2, 0.6)--(2.9,0.6);
			
			\node[above] at (2.1, 0.7) {\small $\Omega_t=\sqrt{\tfrac{|OC|}{|OC_t|}}\triangle_{ABC_t}=\triangle_{A_tB_t\tilde{C}_t}$};

			
			\draw[->] (2.9,0)--(5,0) node[right] {$x$};
			\draw[->] (3.5,-0.2)--(3.5,1.3) node[right] {$y$};
			
			\fill (3.5, 0) circle (0.02 ) node[below]{\small $O$};
			\fill (3.5-0.845*0.4,0) circle (0.02 ) node[below left]{\small $A_t$};
			\draw [thick, dotted] (3.5-0.845*0.4,0)--  (3.5, 0.7*0.845);
			\draw [thick, dotted] (3.5-0.707*0.4,0)--  (3.5, 0.707);
			\draw [thick, dotted] (3.5+0.845*1.2,0)--  (3.5, 0.7*0.845);
			\draw [thick, dotted] (3.5+0.707*1.2,0)--  (3.5, 0.707);
			\fill (3.5-0.707*0.4,0) circle (0.02 ) node[below ]{\small $A_{t_1}$};
			\fill (3.5+1.2*0.845,0) circle (0.02 ) node[below]{\small $B_{t}$};
			\fill (3.5+1.2*0.707,0) circle (0.02 ) node[below]{\small $B_{t_1}$};
			\fill (3.5, 0.7*0.845) circle (0.02 ) node[below]{\small $\tilde{C}_{t}$};
			\fill (3.5, 1*0.707) circle (0.02 ) node[right ]{\small $\tilde{C}_{t_1}$};

		\end{tikzpicture}
		\caption{Illustration of Theorem \ref{yangsheng2} for the case of $t\in (0,t_1]$. Stretching the shortest height $OC$ upward while scaling to keep the area fixed. $|\triangle_{A_tB_t\tilde{C}_t}|=|\triangle_{ABC}|$, $T(\triangle_{A_tB_t\tilde{C}_t})$ is strictly increasing, $\lambda_1(\triangle_{A_tB_t\tilde{C}_t})$ is strictly decreasing.}
		\label{fig:ctm1}
	\end{figure}

	\begin{figure}[htp]
		\centering
		\begin{tikzpicture}[scale = 2.5]
			
			\fill (-0.8, 0) circle (0.02 ) node[below ] {\small$A$};
			\fill (0,1.5) circle (0.02 ) node[right]{\small $C$};
			\fill (0, 0) circle (0.02 ) node[below ] {\small$O$};
			\fill (0.4, 0) circle (0.02 ) node[below ] {\small$B$};
			\fill (0, 1.3) circle (0.02 ) node[right] {\small$C_t$};
			\fill (0,1.1) circle (0.02 ) node[right] {\small$C_{t_2}$};

			\draw[->] (-1.1,0)--(1,0) node[right] {$x$};
			\draw[->] (0,-0.2)--(0,1.8) node[right] {$y$};
			
			\draw (-0.8,0)--(0,1.5);
			\draw[thick,dotted] (0,1.1)--(-0.8,0);
			\draw[thick,dotted] (0,1.3)--(-0.8,0);
			\draw[thick,dotted] (0,1.1)--(0.4,0);
			\draw[thick,dotted] (0,1.3)--(0.4,0);
			\draw (-0.8,0)--(0,1.5);
			\draw (0.4,0)--(0,1.5);


			\draw[->,thick] (1.0, 0.9)--(2.8,0.9);
			
			\node[above] at (1.9, 1.0) {\small $\Omega_t=\sqrt{\tfrac{|OC|}{|OC_t|}}\triangle_{ABC_t}=\triangle_{A_tB_t\tilde{C}_{t}}$};
			
			
			\fill (-0.8*1.074+4, 0) circle (0.02 ) node[below ] {\small$A_t$};
			\fill (-0.8*1.168+4, 0) circle (0.02 ) node[below left] {\small$A_{t_2}$};
			\fill (0.4*1.168+4, 0) circle (0.02 ) node[above] {\small$B_{t_2}$};
			\fill (4, 0) circle (0.02 ) node[below ] {\small$O$};
			\fill (4+0.4*1.074, 0) circle (0.02 ) node[below ] {\small$B_t$};
			\fill (4, 1.074*1.3) circle (0.02 ) node[right] {\small$\tilde{C}_t$};
			\fill (4,1.168*1.1) circle (0.02 ) node[right] {\small$\tilde{C}_{t_2}$};
			\draw [thick, dotted](-0.8*1.074+4, 0) -- (4, 1.074*1.3);
			\draw [thick, dotted](-0.8*1.168+4, 0) -- (4, 1.168*1.1);
			\draw [thick, dotted](0.4*1.074+4, 0) -- (4, 1.074*1.3);
			\draw [thick, dotted](0.4*1.168+4, 0) -- (4, 1.168*1.1);
			
			\draw[->] (2.6,0)--(4.6,0) node[right] {$x$};
			\draw[->] (4,-0.2)--(4,1.8) node[right] {$y$};

		\end{tikzpicture}
		\caption{Illustration of Theorem \ref{yangsheng2'} for the case of $0<t\le t_2$. Compressing below the tallest height $OC$ while scaling to keep the area fixed. $|\triangle_{A_tB_t\tilde{C}_t}|=|\triangle_{ABC}|$, $T(\triangle_{A_tB_t\tilde{C}_t})$ is strictly increasing and $\lambda_1(\triangle_{A_tB_t\tilde{C}_t})$ is strictly decreasing.}
		\label{fig:ctm2}
	\end{figure}

	Therefore, combining the flow processes described in Theorem \ref{yangsheng2} and Theorem \ref{yangsheng2'}, we have found a smooth deformation along which $T(\cdot)/|\cdot|^2$ is strictly increasing and $\lambda_1(\cdot)|\cdot|$ is strictly decreasing,  transforming any arbitrary non-equilateral triangle first into an isosceles triangle, and then further into an equilateral one. This continuous deformation path is new, and it takes at most two steps by stretching the shortest height and compressing the tallest height, to rapidly transform any arbitrary triangle into an equilateral triangle. This also gives another proof of the well-known fact (see \cite[Chapter VII, Page 158]{PS51}) that equilateral triangles uniquely maximize the torsional rigidity and minimize the first eigenvalue of the Dirichlet Laplacian among triangles with fixed area, without taking the limit of iterations of Steiner symmetrizations anymore.

	The flow used in the proof of Theorem \ref{yangsheng2} is generated by the following time-dependent vector field:
	\begin{align*}
		\eta(t,x,y)=\left(0,\frac{y}{1+t}\right),
	\end{align*}
	which represents the height stretching vector field. Applying a similar vector field, which generates a continuous deformation stretching one diagonal of the square, we can also prove the following new monotonicity result, which extends the result in \cite{HP} where the optimal lower bound for $\lambda_1(\cdot)|\cdot|$ on rhombuses was obtained.
	\begin{theorem}
		\label{rhombustheoremc}
		Let $\Omega_q$ be a rhombus, where $q\ge 1$ is the ratio of the lengths
		of the diagonals of a rhombus (the longest diagonal over the shortest one). Then, for $q\ge 1$, $$\frac{d}{dq}\left(T(\Omega_q)/|\Omega_q|^2\right)\le 0,\quad \frac{d}{dq}\left(\lambda_1(\Omega_q)|\Omega_q|\right)\ge 0,$$ 
		with equality holding if and only if $q=1$.
	\end{theorem}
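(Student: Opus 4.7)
The plan is to run the height-stretching flow $\eta(t,x,y)=(0,y/(1+t))$ on a parameterization of $\Omega_q$ whose diagonals lie on the coordinate axes, with vertices $(\pm a,0)$ and $(0,\pm b)$, $b=qa$, and area $|\Omega_q|=2ab$. Under this flow, $\Omega_t=\Omega_{q(1+t)}$; the divergence theorem applied to the velocity $V=(0,y)$ at $t=0$ yields $\frac{d}{dt}|_{t=0}|\Omega_t|=|\Omega_q|$, and by the chain rule $\frac{d}{dq}$ and $\frac{1}{q}\frac{d}{dt}|_{t=0}$ have identical signs on scale-invariant quantities such as $T/|\Omega|^2$ and $\lambda_1|\Omega|$. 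Hence the two assertions of the theorem amount to sign statements for $\frac{d}{dt}|_{t=0}$ of these scale-invariant combinations.

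Hadamard's shape derivative formula (valid for polygonal domains with this compatible velocity field) gives
\[
\frac{d}{dt}\Big|_{t=0}T(\Omega_t)=\int_{\partial\Omega_q}u_\nu^{\,2}\, y\,\nu_2\,d\sigma, \qquad
\frac{d}{dt}\Big|_{t=0}\lambda_1(\Omega_t)=-\int_{\partial\Omega_q}\phi_\nu^{\,2}\, y\,\nu_2\,d\sigma,
\]
where $u$ is the torsion function and $\phi$ the $L^2$-normalized first Dirichlet eigenfunction. Multiplying $-\Delta u=1$ by $y u_y$ and $-\Delta\phi=\lambda_1\phi$ by $y\phi_y$ and integrating by parts on $\Omega_q$ yields the Pohozaev-type identities
\[
\int_{\partial\Omega_q}u_\nu^{\,2}\, y\,\nu_2\,d\sigma=T(\Omega_q)+2\int_{\Omega_q}u_y^{\,2}\,dx, \qquad
\int_{\partial\Omega_q}\phi_\nu^{\,2}\, y\,\nu_2\,d\sigma=2\int_{\Omega_q}\phi_y^{\,2}\,dx.
\]
Combining these with the area variation and with the energy identities $T=\int_{\Omega_q}(u_x^{\,2}+u_y^{\,2})$ and $\lambda_1=\int_{\Omega_q}(\phi_x^{\,2}+\phi_y^{\,2})$, a direct algebraic rearrangement shows that both monotonicity statements reduce to the single scalar inequality
\[
\int_{\Omega_q}u_x^{\,2}\,dx\;\geq\;\int_{\Omega_q}u_y^{\,2}\,dx,
\]
with $u$ standing in turn for the torsion function and for the first eigenfunction, and with equality if and only if $q=1$.

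The heart of the argument is therefore this key inequality, which expresses the intuition that on a vertically elongated rhombus the ground-state function must oscillate more in $x$ than in $y$. The case $q=1$ is immediate, since the square enjoys the extra symmetry $(x,y)\mapsto(y,x)$, which forces $\int u_x^{\,2}=\int u_y^{\,2}$. For $q>1$ the plan is to use the $\mathbb{Z}_2\times\mathbb{Z}_2$ reflection symmetry of $\Omega_q$ to reduce both integrals to the upper-right right triangle $T_+$ with legs $a,b$, on which $u$ solves the same PDE with Dirichlet data on the hypotenuse and Neumann data on each leg. On $T_+$ the $(x,y)$-swap symmetry is broken once $b>a$, and one must turn this into a strictly positive quantitative amount of asymmetry. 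I expect this to be the main obstacle: a natural route is to combine the Pohozaev identities above with a polarization/reflection argument tailored to $T_+$, or, alternatively, to transplant the problem to the unit diamond via the area-preserving linear map $(x,y)\mapsto(x/a,y/b)$ and exploit the resulting anisotropic PDE together with its own Pohozaev identities. A Hopf-lemma-type argument may also be needed to quantify the concentration of $|\nabla u|^{\,2}$ near the obtuse-angle vertices of $\Omega_q$, which is the geometric reason the asymmetry has a definite sign. Once the strict inequality $\int u_x^{\,2}>\int u_y^{\,2}$ is established for $q>1$, strict monotonicity and the rigidity statement at $q=1$ follow at once from the reduction in the previous paragraph, and the lower bound of \cite{HP} is recovered as the value of the functional at the square.
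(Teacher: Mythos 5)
Your reduction is correct as far as it goes: the identities $\int_{\partial\Omega_q}u_\nu^2\,y\,\nu_2\,d\sigma=T(\Omega_q)+2\int_{\Omega_q}u_y^2\,dx$ and $\int_{\partial\Omega_q}\phi_\nu^2\,y\,\nu_2\,d\sigma=2\int_{\Omega_q}\phi_y^2\,dx$ check out, and combining them with the shape derivative and the area variation does reduce both monotonicity claims to $\int_{\Omega_q}u_x^2\,dx\ge\int_{\Omega_q}u_y^2\,dx$ (for the torsion function and, separately, the eigenfunction), with rigidity tied to strictness for $q>1$. This is in fact equivalent to what the paper must prove, since the paper uses the same stretching field and the same Pohozaev-type bookkeeping. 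But at exactly the point where all the content of the theorem lives, your proposal stops being a proof: the key inequality for $q>1$ is only announced as "the plan", with several candidate strategies (polarization/reflection on the quarter triangle with mixed Dirichlet--Neumann conditions, transplantation to the unit diamond with an anisotropic operator, a Hopf-lemma quantification near the obtuse vertices), none of which is carried out, and no argument is given that any of them closes. The mixed boundary-condition triangle $T_+$ is genuinely harder to polarize than a Dirichlet triangle, and the anisotropic transplantation changes the equation rather than the domain, so neither route is routine; as written, the central step and the equality characterization are both missing.

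For comparison, the paper closes this step by staying on the boundary: by the four-fold symmetry and the Pohozaev identity, the derivative of $T(\Omega_t)/|\Omega_t|^2$ along the stretching flow collapses to a constant times $\int_{CD_t}|\nabla u(t)|^2\,(y-x\tan\theta_t)\,ds$ over a single side, where the linear weight $y-x\tan\theta_t$ is antisymmetric about the midpoint $M_t$ of that side and negative on the half adjacent to the obtuse vertex. The sign (and the rigidity at $t=0$) then follows from a pointwise comparison $|\nabla u(t)(p_t)|\ge|\nabla u(t)(p_t')|$ for pairs of boundary points symmetric about $M_t$, obtained by the reflection/moving-plane argument of Theorem \ref{symmetrycomparison} adapted to the rhombus. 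If you want to complete your version, the most economical fix is to note that, by the same Pohozaev identity, your quantity $\int u_x^2-\int u_y^2$ equals (up to a positive factor) precisely such a one-side boundary integral, and then supply this pointwise reflection comparison; without that, or a worked-out substitute on $T_+$, the proposal has a genuine gap.
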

	Theorem \ref{rhombustheoremc} states that the normalized torsional rigidity $T(\Omega_q)/|\Omega_q|^2$ is non-increasing with respect to $q$, while the normalized eigenvalue $\lambda_1(\Omega_q)|\Omega_q|$ is non-decreasing with respect to $q$. Theorem \ref{rhombustheoremc} not only allows us to compare the magnitudes of $T(\cdot)$ or $\lambda_1(\cdot)$ on two different rhombuses, but also gives a partial rigidity result that among rhombuses, squares are the unique stationary shapes along the diagonal-stretching flow. 
	
	Another reason for considering rhombuses is  their role in the symmetrization of quadrilaterals. It is well known (see for example \cite[Sec 3.3]{Henrot}) that any quadrilateral can be transformed into a rhombus through two standard steps of Steiner Symmetrization. Then, with an additional step of Steiner Symmetrization, it can be turned into a rectangle. To prove that the square uniquely maximizes $T(\cdot)$ or minimizes $\lambda_1(\cdot)$ among quadrilaterals with a given area, the standard approach involves repeatedly symmetrizing the rectangle and the rhombus to generate a sequence of shapes that progressively approximate a square. Theorem \ref{rhombustheoremc} gives a new proof of this result by demonstrating the monotonicity properties of $T(\cdot)$ and $\lambda_1(\cdot)$ through the one-step stretching or compressing deformation transforming a rhombus into a square.

	\vskip 0.2cm
	Now we continue to consider Question \ref{kq2} on triangles. First, given two non-equilateral triangles with the same area, if they share one same side, then by the continuous Steiner symmetrization, we are able to compare the values of $T(\cdot)$ and $\lambda_1(\cdot)$ on the two shapes. 
	
	It is then natural for us to consider the situation when the two given triangles share one same angle instead of one same side. Here we recall the very novel work \cite{FV19} where the following partial rigidity result on triangles is established: a triangle is isosceles with the given side as the base, if and only if the shape derivative of the transformation which rotates around the midpoint of the base and then parallel moves the base side to keep the area fixed, is zero. 
	
	Motivated by \cite{FV19} and the stretching flow approach as used in the proof of Theorem \ref{yangsheng2}, we have the following monotonicity result, which enables us to answer Question \ref{kq2} in the case of two triangles sharing one same angle. 
	
	\begin{theorem}
		\label{yangsheng1}
		Let $\Omega$ be a triangle $\triangle_{ABC}$, where $A$ lies at the origin, $B$ lies on the positive $x$-axis and $|AB|=|AC|$. Let $B_t=(1+t)B$. Then, $T(\triangle_{AB_tC})/|\triangle_{AB_tC}|^2$ is strictly decreasing, while $\lambda_1(\triangle_{AB_tC})|\triangle_{AB_tC}|$ is strictly increasing with respect to $t>0$. 
	\end{theorem}
	
	The deformations considered in Theorem \ref{yangsheng1} can generate all triangles sharing the same angle $\angle A$, as $t$ goes from $0$ to $\infty$. As a consequence, we immediately have the following:
	\begin{corollary}
		\label{jc}
		Let $\Omega_{\alpha,q}$ denote the triangle of a given area, with one angle equal to $\alpha$ and $q\ge 1$ being the ratio of the two legs near the angle. Then, for any fixed $\alpha \in (0,\pi)$, when $q\ge 1$, $T(\Omega_{\alpha,q})$ is a strictly decreasing function, and $\lambda_1(\Omega_{\alpha,q})$ is a strictly increasing function.
	\end{corollary}
	Therefore, Corollary \ref{jc} answers Question \ref{kq2} when the two given triangles share one same angle. 
	Such result was actually first established in Siudeja \cite{SB10} when the fixed angle $\alpha$ is the smallest angle. The proof there is by the ingenious use of polar symmetrization. Later, by delicate applications of the continuous Steiner symmetrization argument, Solynin \cite{Solynin20} removes the smallest assumption on the fixed angle. Though we obtained essentially the same result, our proof seems to be more natural and direct from the flow perspective. 

As a direct consequence of Theorem \ref{yangsheng1}, we obtain the following two corollaries regarding the monotonicity behavior of the shape functionals when vertices are moved along angle bisectors.
	\begin{corollary}
		\label{yangshengcor1}
		Let $\triangle ABC$ satisfy $|AB|>|AC|>|BC|$.  
		Let $A_t$ move along the internal bisector of the angle $\angle A$ toward $BC$, with $B$ and $C$ fixed, and let $t_0$ be the first time when $|AB|>|A_{t_0}C|=|BC|$.
		Then for $0<t\le t_0$, $T(\triangle_{A_tBC})/|\triangle_{A_tBC}|^2$ is a strictly increasing function, while $\lambda_1(\triangle_{A_tBC})|\triangle_{A_tBC}|$ is a strictly decreasing function. 
	\end{corollary}
	
	\begin{corollary}
		\label{yangshengcor2}
		Let $\triangle ABC$ satisfy $|AB|>|AC|\ge |BC|$.  
		Let $C_t$ move outwards along the exterior angle bisector of $\angle C$, with $A$ and $B$ fixed, and let $t_1$ be the first time when $|AB|=|AC_{t_1}|\ge |BC_{t_1}|$.
		Then for $0<t\le t_1$, $T(\triangle_{ABC_t})/|\triangle_{ABC_t}|^2$ is a strictly increasing function, while $\lambda_1(\triangle_{ABC_t})|\triangle_{ABC_t}|$ is a strictly decreasing function. 
	\end{corollary}
Corollaries \ref{yangshengcor1} and \ref{yangshengcor2} together yield another continuous deformation path via angle bisector flows, through which an arbitrary non-equilateral triangle can be transformed into an equilateral one while $T(\cdot)/|\cdot|^2$ strictly increases and $\lambda_1(\cdot)|\cdot|$ strictly decreases. 	
	
	\subsection{A new proof of the Saint-Venant inequality by the mean curvature flow}
	
	Now we address Question \ref{kq1} in the context of general bounded convex domains in $\mathbb{R}^n$. Specifically, we would like to know whether $T(\cdot)/|\cdot|^{(n+2)/n}$ increases as a shape becomes rounder.
	
	A natural way to make a smooth convex region increasingly round is to evolve its boundary by curvature flows. Two typical examples are the mean curvature flow and the inverse mean curvature flow. For a convex domain, under the mean curvature flow, boundary points with larger positive curvature move inward faster than those with smaller curvature, making the shape rounder. Under the inverse mean curvature flow, points with larger curvature move outward slower, also leading to a rounder shape. Therefore, in the spirit of Question \ref{kq1}, one might expect that shape functionals for which balls are optimal exhibit monotonicity along such flows. Indeed, several functionals have been shown to satisfy this property. For example, the isoperimetric ratio is monotone along the curve shortening flow, which evolves a curve to a round point; thus the isoperimetric inequality can be proved via the mean curvature flow (see \cite{Gage}, \cite{Grayson} and references for two dimensions, and \cite{Huisken84}, \cite{Schulze} for higher dimensions). Another example is the Minkowski inequality for total mean curvature, which is monotone along the normalized inverse mean curvature flow (see \cite{GP09}, \cite{Ger90} and \cite{HI01}).
	
	A common feature of these functional inequalities proved via curvature flows is that the functionals do not involve state functions (i.e., solutions to a PDE inside the domain). To our knowledge, no literature has proved the Saint-Venant inequality using geometric flows. This may be because geometric flows focus primarily on curvature evolution, whereas the presence of the torsion function in $T(\cdot)$ introduces nonlocal information of the evolving domains. In general, it is challenging to deduce monotonicity of shape functionals involving state functions solely from information about curvature evolution.
	
	Although we have not been able to establish the monotonicity of $T(\cdot)/|\cdot|^{(n+2)/n}$ along a geometric flow that transforms an arbitrary convex domain into a ball, we obtain the following weak monotonicity result along the mean curvature flow.
	\begin{theorem}
		\label{svinequalitynintro}
		Let $\Omega\subset\mathbb{R}^n$ be a bounded smooth convex domain. Let $\{\Omega_t\}_{t\in[0,t_0)}$ be the family of domains evolving by the mean curvature flow with initial data $\Omega_0=\Omega$, i.e., the boundary $\partial\Omega_t$ moves with normal velocity equal to its mean curvature $H$ (with respect to the inward normal). Here $t_0>0$ is the maximal time such that $\Omega_t$ shrinks to a point. Then, for any $0\le t<t_0$,
		\begin{align}
			\label{mcfweak}
			\frac{d}{dt}\left( T(\Omega_t)-\frac{T(B_1)}{|B_1|^{\frac{n+2}{n}}}|\Omega_t|^{\frac{n+2}{n}}\right)\ge 0,
		\end{align} where $B_1$ is the unit ball in $\mathbb{R}^n$.
	\end{theorem}
	
	As a consequence, we obtain a new proof of the Saint-Venant inequality and characterize the equality case in the convex smooth setting. 
	
	The following questions remain open to us:
	
	\vskip 0.2cm
	\noindent\textbf{Open Questions}: 
	Restricted to bounded smooth convex domains, is the normalized torsional rigidity $T(\cdot)/|\cdot|^{(n+2)/n}$ increasing along the mean curvature flow or the inverse mean curvature flow? Also, is $\lambda_1(\cdot)|\cdot|^{2/n}$ decreasing along these curvature flows, or is there even a geometric flow approach to proving the Faber-Krahn inequality? We also find a nonlocal flow along which the normalized torsional rigidity is increasing, see section 9, while the convergence problem remains open to us.  

\vskip 0.2cm
		We should mention that \eqref{mcfweak} is not equivalent to 
		\begin{align}
			\label{mcfstrong}
			\frac{d}{dt}\left(	\frac{T(\Omega_t)}{|\Omega_t|^{(n+2)/n}}\right)\ge 0
		\end{align}
	 along the mean curvature flow. To prove \eqref{mcfweak}, by direct computation (see section 5), it suffices to prove that
		\begin{align*}
			J(\Omega):=\frac{\int_{\partial \Omega}|\nabla u_\Omega|^2 H\, d\sigma}{|\Omega|^{\frac{2}{n}}\int_{\partial \Omega}H\, d\sigma}
		\end{align*}
		attains its maximum only at ball shape among smooth convex domains. This is easy to prove, since fixing volume, balls uniquely maximize $\int_{\partial \Omega}|\nabla u_\Omega|^2 H\, d\sigma$ while uniquely minimize $\int_{\partial \Omega}H\, d\sigma$ among smooth convex domains. However, to prove \eqref{mcfstrong}, the same strategy requires us to prove that
		\begin{align}
			\label{qfunctional}
		Q(\Omega):=\frac{|\Omega|\int_{\partial \Omega}|\nabla u_\Omega|^2 H\, d\sigma}{T(\Omega)\int_{\partial \Omega}H\, d\sigma}
		\end{align}
	 attains its supremum only when $\Omega$ is a ball. This is much more difficult to prove, since  even in two dimensions, balls maximize both the numerator and the denominator. So far we can only prove (see section 6):
	 \begin{itemize}
	 	\item $Q(\cdot)$ is uniformly bounded among smooth convex domains.
	 	\item Disks are strict local maximizers to $Q(\cdot)$ in two dimensions.
	 	\item Among planar domains enclosed by ellipses, $Q(\cdot)$ attains its maximum at balls.
	 \end{itemize}
Based on the theoretical results as well as numerical experiments, we conjecture that at least in two dimensions, $Q(\cdot)$ attains its supremum only at disks. 

We also note that a similar argument does not yield the Faber-Krahn inequality via the mean curvature flow, because the first eigenfunction of the Dirichlet Laplacian on a ball is not quadratic—a property that is essential to our proof of Theorem \ref{svinequalitynintro}.

	\subsection{On averaged squared norm of the gradient of torsion function, rigidity and monotonicity results on rectangles}
	In \cite{4L24}, it is proved by the reflection argument that for any triangle $\Omega=\triangle_{ABC}$ with $u=u_\Omega$ being its torsion function, if $|AB|\ge |BC| \ge |AC|$, then 
	\begin{align*}
		\Vert \nabla u\Vert_{L^\infty(AB)}\ge \Vert \nabla u\Vert_{L^\infty(BC)}\ge \Vert \nabla u\Vert_{L^\infty(AC)},
	\end{align*}
	and the inequalities are strict if the length comparison is strict. On the other hand, by the shape derivative of parallel movement as shown in \cite{FV19}, it is known that
	\begin{align*}
		\frac{1}{|AB|}\int_{AB}|\nabla u|^2\, ds=\frac{1}{|BC|}\int_{BC}|\nabla u|^2\, ds=\frac{1}{|AC|}\int_{AC}|\nabla u|^2\, ds.
	\end{align*}
	In other words, these results together indicate the very interesting geometric phenomenon: in an arbitrary triangular domain, the longer the side length implies a larger maximal norm of the gradient of the torsion function over the side, but no matter how long the side length is, the averaged squared norm of the gradient on each side is the same. 
	
	It is then a natural question to consider the case of rectangles. In \cite{LY24}, it is shown that if $R=\square_{ABCD}$ is a rectangle with $|AB|>|BC|$, then the torsion function $u_R$ has the property
	\begin{align*}
		\Vert \nabla u_R\Vert_{L^\infty(AB)}> \Vert \nabla u_R\Vert_{L^\infty(BC)}.
	\end{align*}
	Regarding the averaged squared norm of the gradient on each side, quite different from the triangular case, we observe the following inequality.
	
	\begin{proposition}
		\label{not-equidistribution-thmintro}
		Let $R=\square_{ABCD}$ be a rectangle with $|AB|>|AD|$. Then, 
		\begin{align}
			\label{rectangle-neumann-data}
			\frac{\int_{AB}|\nabla u_R|^{2}\,ds}{|AB|}> \frac{\int_{AD}|\nabla u_R|^{2}\,ds}{|AD|},
		\end{align}
		where $u_R$ is the torsion function in $R$. 
	\end{proposition}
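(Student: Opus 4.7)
The plan is first to reduce the claim to an integral inequality via a Pohozaev-type identity, and then to analyze the resulting quantity using the Fourier series representation of the torsion function. I would place coordinates so that $A=(0,0)$, $B=(a,0)$, $D=(0,b)$, $C=(a,b)$ with $a:=|AB|>|AD|=:b$, and set $L:=\int_{CD}|\nabla u|^2\,ds$ and $S:=\int_{BC}|\nabla u|^2\,ds$; by the reflection symmetries of $R$ these equal $\int_{AB}|\nabla u|^2\,ds$ and $\int_{AD}|\nabla u|^2\,ds$, so the claim becomes $bL>aS$. Multiplying $-\Delta u=1$ by $y\,u_y$ and integrating over $R$ (using that $\nabla u$ is parallel to $\nu$ on $\partial R$) yields the Rellich-type identity
\[
\int_R (u_y^2-u_x^2)\,dx\,dy=\frac{bL-aS}{2},
\]
so the proposition reduces to proving $\int_R u_y^2>\int_R u_x^2$ whenever $a>b$.

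For that I would expand $u$ as a sine series in $y$, $u(x,y)=\sum_{n\text{ odd}}f_n(x)\sin(n\pi y/b)$, where $f_n$ is obtained explicitly by solving $-f_n''+(n\pi/b)^2 f_n=4/(n\pi)$ with $f_n(0)=f_n(a)=0$, namely
\[
f_n(x)=\frac{4b^2}{(n\pi)^3}\left[1-\frac{\cosh(n\pi(x-a/2)/b)}{\cosh T_n}\right],\qquad T_n:=\frac{n\pi a}{2b}.
\]
Using orthogonality of $\{\sin(n\pi y/b)\}$ and $\{\cos(n\pi y/b)\}$ (odd $n$) on $(0,b)$, together with explicit integrals of $\cosh^2$ and $\sinh^2$, the target integral should collapse to
\[
\int_R(u_y^2-u_x^2)\,dx\,dy=\frac{8ab^3}{\pi^4}\sum_{n\text{ odd}}\frac{g(T_n)}{n^4},\qquad g(T):=1+\frac{1}{\cosh^2 T}-\frac{2\tanh T}{T}.
\]
Writing $\tau:=a/b\ge 1$ and $G(\tau):=\sum_{n\text{ odd}} g(n\pi\tau/2)/n^4$, the goal becomes $G(\tau)>0$ for $\tau>1$. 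At $\tau=1$ the diagonal symmetry $u(x,y)=u(y,x)$ of the square forces $\int u_y^2=\int u_x^2$, hence $G(1)=0$, which provides the base case.

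The main obstacle is the strict monotonicity $G'(\tau)>0$ for $\tau\ge 1$. Since $|g|$ and $|g'|$ are uniformly bounded on $[\pi/2,\infty)$, termwise differentiation is legitimate and the question reduces to showing $g'(T)>0$ for every $T\ge\pi/2$. A direct algebraic manipulation should rewrite this condition as the transcendental inequality $\sinh(2T)>2T+2T^2\tanh T$; this is the only nontrivial analytic step. I plan to close it with two elementary bounds: the Taylor estimate $\sinh(2T)-2T>(2T)^3/6=4T^3/3$ (valid for every $T>0$), and the fortunate chain $2T\ge\pi>3\ge 3\tanh T$ for $T\ge\pi/2$, which gives $4T^3/3>2T^2\tanh T$. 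Combining the two yields $\sinh(2T)-2T>2T^2\tanh T$, so $g'(T)>0$ on $[\pi/2,\infty)$; hence $G$ strictly increases from $G(1)=0$, and $G(\tau)>0$ for $\tau>1$. A termwise argument would fail here: $g(\pi/2)$ is in fact slightly \emph{negative}, so the positivity of $G$ depends on the coordinated contribution of all odd modes, which is precisely why the monotonicity-in-$\tau$ viewpoint is essential.
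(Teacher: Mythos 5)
Your proposal is correct, but it follows a genuinely different route from the paper. The paper's proof is purely qualitative: it reflects across the diagonal line $y=x$ through the corner $A$ and uses the moving plane/Hopf comparison (as in Theorem \ref{symmetrycomparison}) to get $|\nabla u(P)|<|\nabla u(P')|$ for corresponding points on the half-sides $AM\subset AD$ and $AM'\subset AB$, together with the monotonicity of $|\nabla u|$ along $AN$ coming from Steiner symmetry in $x$; averaging these pointwise comparisons and using the midline symmetries gives \eqref{rectangle-neumann-data} with no computation. You instead prove the quantitative identity $bL-aS=2\int_R(u_y^2-u_x^2)$ (which is consistent with, and sharpens, the Pohozaev relation $aS+bL=4T(R)$ used throughout the paper), and then verify $\int_R u_y^2>\int_R u_x^2$ for $a>b$ by the explicit sine-series solution: I checked your mode-by-mode computation, and it does collapse to $\frac{8ab^3}{\pi^4}\sum_{n\ \mathrm{odd}}g(T_n)/n^4$ with $g(T)=1+\cosh^{-2}T-2T^{-1}\tanh T$, the reduction of $g'(T)>0$ to $\sinh(2T)>2T+2T^2\tanh T$ is correct, and your two elementary bounds close it for $T\ge\pi/2$; combined with $G(1)=0$ from the diagonal symmetry of the square, this gives the strict inequality. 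Your remark that termwise positivity fails (indeed $g(\pi/2)<0$) is accurate and shows why the monotonicity-in-$\tau$ step is genuinely needed. What each approach buys: the paper's argument is shorter, avoids special functions, and uses exactly the reflection toolkit already deployed for triangles and rhombuses, so it fits the flow-based sign analysis of Theorem \ref{recderipos} seamlessly; your argument is specific to rectangles (it relies on separability) and requires justifying the Rellich identity and termwise differentiation, but in exchange it yields an explicit formula for the deficit $bL-aS$, hence a quantitative version of \eqref{rectangle-neumann-data} and of the strict negativity in Theorem \ref{recderipos}, which the soft reflection argument does not provide.
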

	
	As a consequence of Proposition \ref{not-equidistribution-thmintro}, we have the monotonicity property of the torsional rigidity on rectangles along lengths stretching flows. 
	\begin{theorem}
		\label{recderipos}
		Let $R=\square ABCD$ be the square with vertices $A=(0,0)$, $B=(b,0)$, $C=(b,b)$ and $D=(0,b)$, $b>0$. Let $B_t=((1+t)b,0)$, $C_t=((1+t)b,b)$ and $R_t=\square AB_tC_tD$ be the rectangle with vertices $A,B_t,C_t,D$. Then, for any $t\ge 0$,
		\begin{align*}
			\frac{d}{dt}\left(\frac{T(R_t)}{|R_t|^2}\right)\le  0,
		\end{align*}
		with equality holding if and only if $t=0$.
	\end{theorem}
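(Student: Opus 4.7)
The plan is to reduce the monotonicity of $T(R_t)/|R_t|^2$ to the averaged-gradient inequality on rectangle sides supplied by Proposition \ref{not-equidistribution-thmintro}, using the Hadamard shape derivative formula together with a Rellich--Pohozaev identity for the torsion function.

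First I would set up the stretching flow. Writing $R_t = [0,(1+t)b]\times[0,b]$, the Eulerian velocity has normal component $V\cdot n = b$ on the right side $B_tC_t$ and vanishes on the other three sides. Since $u_t$ is zero on $\partial R_t$, the Hadamard formula gives $T'(R_t) = b\int_{B_tC_t}|\nabla u_t|^2\,ds$. Combined with $|R_t|=(1+t)b^2$, an elementary quotient-rule computation shows that the inequality $\frac{d}{dt}\bigl(T(R_t)/|R_t|^2\bigr)\le 0$ is equivalent to
\[
(1+t)\,b\int_{B_tC_t}|\nabla u_t|^2\,ds \;\le\; 2\,T(R_t).
\]

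Next I would apply the Pohozaev identity $\int_{\partial R_t}|\nabla u_t|^2\,((x,y)-c_t)\cdot n\,ds = 4\,T(R_t)$, which follows from multiplying $-\Delta u_t=1$ by $((x,y)-c_t)\cdot\nabla u_t$ (with $c_t$ the center of $R_t$) and integrating by parts. The factor $((x,y)-c_t)\cdot n$ takes the constant value $(1+t)b/2$ on the two vertical sides and $b/2$ on the two horizontal sides, so the reflection symmetries of $u_t$ collapse this identity to
\[
(1+t)\,b\int_{B_tC_t}|\nabla u_t|^2\,ds + b\int_{C_tD}|\nabla u_t|^2\,ds \;=\; 4\,T(R_t).
\]
Substituting this expression for $2T(R_t)$ into the previous display, the desired monotonicity becomes equivalent to
\[
\frac{1}{|B_tC_t|}\int_{B_tC_t}|\nabla u_t|^2\,ds \;\le\; \frac{1}{|C_tD|}\int_{C_tD}|\nabla u_t|^2\,ds,
\]
which is exactly Proposition \ref{not-equidistribution-thmintro} applied to $R_t$ (strict for $t>0$, since $C_tD$ is the strictly longer side). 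For $t=0$ the rectangle is a square and the $x\leftrightarrow y$ symmetry forces equality between the two averages, giving equality in the monotonicity statement.

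The main obstacle is not any of these three steps in isolation, each of which is algebraic; it is Proposition \ref{not-equidistribution-thmintro} itself, whose proof supplies the decisive gradient-average asymmetry between the longer and shorter sides. A minor technical point is justifying the Pohozaev identity on a Lipschitz polygon; this follows from the smoothness of $u_t$ on $\overline{R_t}$ away from the four corners and a short verification that the corner contributions to the relevant boundary integrals vanish, so the standard smooth-domain derivation extends without essential change.
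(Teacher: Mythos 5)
Your proposal is correct and takes essentially the same route as the paper: the Hadamard shape derivative along the horizontal stretching flow, combined with the Pohozaev boundary identity for the torsion function, reduces the monotonicity claim exactly to the side-averaged gradient comparison of Proposition \ref{not-equidistribution-thmintro}, with equality at $t=0$ by the symmetry of the square. The only cosmetic difference is that you center the Pohozaev identity at the rectangle's center and use reflection symmetry to combine opposite sides, whereas the paper places the origin at the fixed vertex $A$ so that the two unmoved sides drop out of the boundary integral directly.
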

	
	
	The following monotonicity result is then immediately obtained as a corollary by scaling.
	\begin{proposition}
		\label{rectangle-thm}
		Let $R(s):=[0,s]\times[0,s^{-1}](s>0)$ represent a rectangle in the plane. Then the torsional rigidity $T(R(s))$ is strictly increasing with respect to $s\in (0,1)$.
	\end{proposition}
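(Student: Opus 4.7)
The plan is to deduce Proposition~\ref{rectangle-thm} from Theorem~\ref{recderipos} by a short scaling argument. I would first note that for any planar domain one has $T(\lambda R)=\lambda^{4}T(R)$ and $|\lambda R|=\lambda^{2}|R|$ under a homothety of ratio $\lambda>0$, so the ratio $T(R)/|R|^{2}$ is scale-invariant. In particular, for a rectangle $R=[0,a]\times[0,b]$ this ratio depends only on the aspect ratio $r:=\max(a,b)/\min(a,b)\in[1,\infty)$; call it $F(r)$, so that $F:[1,\infty)\to(0,\infty)$.

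Next, I would translate Theorem~\ref{recderipos} into a statement about $F$. The family $R_{t}=[0,(1+t)b]\times[0,b]$ has aspect ratio exactly $1+t$ for $t\ge 0$, and taking $b=1$ sweeps out every aspect ratio $r\ge 1$ as $t$ ranges over $[0,\infty)$. Hence Theorem~\ref{recderipos} asserts that $t\mapsto F(1+t)$ has non-positive derivative on $[0,\infty)$, with strict negativity for $t>0$. Combined with continuity this yields that $F$ is strictly decreasing on $[1,\infty)$.

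Finally, I would apply this to $R(s)=[0,s]\times[0,s^{-1}]$. For $s\in(0,1)$ it has area $|R(s)|=1$ and aspect ratio $s^{-1}/s=s^{-2}>1$, so
\begin{equation*}
T(R(s))=\frac{T(R(s))}{|R(s)|^{2}}=F(s^{-2}).
\end{equation*}
Since $s\mapsto s^{-2}$ is strictly decreasing from $(0,1)$ onto $(1,\infty)$ and $F$ is strictly decreasing on $(1,\infty)$, the composition $s\mapsto F(s^{-2})=T(R(s))$ is strictly increasing on $(0,1)$, which is the desired conclusion.

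I do not anticipate any real obstacle: the entire analytic content has been absorbed into Theorem~\ref{recderipos} and the underlying gradient-norm inequality of Proposition~\ref{not-equidistribution-thmintro}, so what remains is purely algebraic manipulation of scaling factors. The only minor point to verify is that the one-parameter family in Theorem~\ref{recderipos}, through scale invariance, does determine $F$ on the whole of $[1,\infty)$; this is immediate by choosing $b=1$.
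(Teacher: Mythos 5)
Your proposal is correct and follows the same route as the paper, which obtains Proposition \ref{rectangle-thm} directly as a corollary of Theorem \ref{recderipos}; your write-up simply makes explicit the scale-invariance of $T(\cdot)/|\cdot|^2$ and the reparametrization by aspect ratio that the paper treats as immediate. (The paper's Section 7 also gives an independent proof via continuous Steiner symmetrization, but that is presented only as an alternative to the flow-based argument you use.)
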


	While Proposition \ref{rectangle-thm} can also be proved by the continuous Steiner symmetrization argument (see section 8), the strict positivity of $\tfrac{d}{ds}T(R(s))$ derived as a consequence of Theorem \ref{recderipos}, cannot be simply derived from the continuous symmetrization argument. Therefore, Proposition \ref{rectangle-thm} is weaker than Theorem \ref{recderipos}.

\vskip 0.3cm		
\subsection*{Further Remarks}

We note that in recent years, there has been a growing interest in using geometric flows to study torsional rigidity from various perspectives. For instance, \cite{Hu2024} employed a Gauss curvature flow to tackle the torsional Minkowski problem, while \cite{Garcia2026} focuses on estimating and comparing the evolution of torsional rigidity under the Ricci flow and the inverse mean curvature flow on manifolds.

In contrast to those studies, our present paper focuses on monotonicity and comparison results for polygons and convex domains along novel, explicitly constructed stretching or bisector flows, as well as along the mean curvature flow, with the aim of answering Questions \ref{kq1}-\ref{kq2}. The search for alternative deformation paths is further motivated by fundamental open problems in the field, notably the unresolved Pólya–Szegő conjecture for $N$-gons as mentioned before. Since leg-stretching and angle-bisector flows can preserve the number of sides for generic $N$-gons, and by establishing monotonicity along such natural, non-symmetrizing flows for triangles (a typical case), our findings might offer some evidence in validating this longstanding conjecture.

More broadly, the many different flows studied here share an intuitive geometric feature: they progressively make a shape “more regular,” that is, closer to an optimal shape (such as an equilateral triangle). A key insight from our results is that this intuitive geometric improvement, even when achieved without symmetrization, consistently correlates with functional monotonicity. This suggests that monotonicity may be a more widespread phenomenon than previously thought, linked to a general notion of shape “betterness” rather than to the specific technique of symmetrization. Consequently, the perspective offered by Questions \ref{kq1} and \ref{kq2} can naturally stimulate researches on new geometric functional inequalities and related flow problems, as illustrated in Sections 6 and 9, which are of independent research interest.

Lastly, we note that the framework developed here is applicable beyond the problems considered. For example, a direct extension yields analogous monotonicity results for Riesz-type nonlocal energies, as treated in our forthcoming paper \cite{HLW}.

\vskip 0.3cm
	
	\noindent

	\noindent \textbf{Outline of the paper} In section 2, we present some preliminaries. In section 3, we consider height-stretching flows on triangles, and we will prove Theorem \ref{yangsheng2}, Theorem \ref{yangsheng2'} and Theorem \ref{rhombustheoremc}. In section 4, we consider leg-stretching flows and angle-bisector flows on triangles, and we will mainly prove Theorem \ref{yangsheng1} and Corollaries \ref{yangshengcor1}-\ref{yangshengcor2}. In section 5, we give a new proof of the Saint-Venant inequality for bounded smooth convex domains, via the mean curvature flow. We will mainly prove Theorem \ref{svinequalitynintro}. In section 6, we will prove some extremal properties of $Q(\cdot)$ defined in \eqref{qfunctional}. In section 7, we will prove Proposition \ref{not-equidistribution-thmintro} and Theorem \ref{recderipos}. In section 8, we use continuous Steiner symmetrization to give an alternative proof of Proposition \ref{rectangle-thm}. In section 9, we give some further remarks on Question \ref{kq1} and propose another conjecture.
	\section{Preliminaries}
	Throughout the paper, $ds$ denotes the arc-length measure on curves, and $d\sigma$ the surface measure on hypersurfaces.
	
	The following result is due to the Pohozaev's identity, see for example \cite[Theorem 2.1.8]{DVbook}.
	
	\begin{theorem}[Pohozaev's identity.]
		\label{poho}
		Let $\Omega\subset \mathbb R^n$ be a piecewise smooth bounded domain. Let $f\in C^0(\mathbb R)$ be a continuous function on the real line and let $F(z):=\int_1^z f(s)\mathrm ds$ be one of its primitives. Now let $\phi\in C^2(\Omega)\cap C^1(\bar{\Omega})$ satisfy the semi-linear boundary  value problem
		$$\begin{cases} -\Delta\phi=f(\phi) & \text{in}~\Omega \\ \phi=0 & \text{on}~\partial\Omega \end{cases}$$
		Then, the following equality holds:
		$$\left(1-\frac{n}{2}\right)\int_{\Omega}|\nabla \phi|^2\, dx+n\int_{\Omega} F(\phi)\, dx=\frac{1}{2}\int_{\partial\Omega}\,|\nabla \phi|^2 (x\cdot \nu)\,  d\sigma,$$
		where $\nu$ is the outward unit normal to $\partial\Omega$.
	\end{theorem}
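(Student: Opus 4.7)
The plan is to prove Pohozaev's identity by the classical Rellich--Pohozaev multiplier technique: I multiply both sides of $-\Delta\phi=f(\phi)$ by the radial multiplier $x\cdot\nabla\phi$ and integrate over $\Omega$. The identity will then follow by evaluating
\[
-\int_{\Omega}(\Delta\phi)(x\cdot\nabla\phi)\,dx \;=\; \int_{\Omega} f(\phi)(x\cdot\nabla\phi)\,dx
\]
in two different ways, expressing each side as a bulk integral plus a boundary integral via integration by parts.

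For the left-hand side, I would first apply Green's identity to get
\[
-\int_{\Omega}(\Delta\phi)(x\cdot\nabla\phi)\,dx = \int_{\Omega}\nabla\phi\cdot\nabla(x\cdot\nabla\phi)\,dx - \int_{\partial\Omega}(\partial_\nu\phi)(x\cdot\nabla\phi)\,d\sigma,
\]
then use the algebraic identity $\nabla\phi\cdot\nabla(x\cdot\nabla\phi)=|\nabla\phi|^2+\tfrac12\,x\cdot\nabla|\nabla\phi|^2$ and apply the divergence theorem to the vector field $x|\nabla\phi|^2$, reducing the interior term to $(1-n/2)\int_{\Omega}|\nabla\phi|^2\,dx+\tfrac12\int_{\partial\Omega}|\nabla\phi|^2(x\cdot\nu)\,d\sigma$. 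The Dirichlet condition $\phi=0$ on $\partial\Omega$ is then crucial: on the smooth portions of $\partial\Omega$, $\nabla\phi$ is parallel to $\nu$, so $x\cdot\nabla\phi=(x\cdot\nu)\partial_\nu\phi$ and hence $(\partial_\nu\phi)(x\cdot\nabla\phi)=|\nabla\phi|^2(x\cdot\nu)$, which collapses the net boundary contribution on this side to $-\tfrac12\int_{\partial\Omega}|\nabla\phi|^2(x\cdot\nu)\,d\sigma$.

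For the right-hand side, the chain rule $\nabla F(\phi)=f(\phi)\nabla\phi$ lets me rewrite the integrand as $x\cdot\nabla F(\phi)$ and integrate by parts once, yielding $\int_{\partial\Omega}F(\phi)(x\cdot\nu)\,d\sigma - n\int_{\Omega}F(\phi)\,dx$; the boundary piece disappears once the primitive is normalized so that $F$ vanishes at the boundary value of $\phi$. Combining the two sides and rearranging then produces the claimed identity. The main technical point I expect to spend some care on is justifying the integrations by parts in the piecewise smooth setting: $\partial\Omega$ is only Lipschitz at corners and edges, but the singular set there has zero surface measure, so the divergence theorem applies directly on each smooth piece with the outward normal defined almost everywhere; alternatively, one may exhaust $\Omega$ from the interior by smooth subdomains $\Omega_k\Subset\Omega$ and pass to the limit using $\phi\in C^2(\Omega)\cap C^1(\overline{\Omega})$.
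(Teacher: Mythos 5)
Your multiplier argument is correct, and it is worth noting that the paper itself does not prove this statement at all: it is quoted as a known result with a citation to \cite{DVbook}, and is then only used through its two consequences \eqref{boundarytorsion} and \eqref{boundarylambda1}. Your proposal therefore supplies the standard Rellich--Pohozaev proof that the paper omits: multiplying $-\Delta\phi=f(\phi)$ by $x\cdot\nabla\phi$, using $\nabla\phi\cdot\nabla(x\cdot\nabla\phi)=|\nabla\phi|^2+\tfrac12\,x\cdot\nabla|\nabla\phi|^2$ together with $\operatorname{div}\bigl(x|\nabla\phi|^2\bigr)=n|\nabla\phi|^2+x\cdot\nabla|\nabla\phi|^2$, and exploiting that the Dirichlet condition forces $\nabla\phi=(\partial_\nu\phi)\nu$ on the smooth part of $\partial\Omega$, is exactly the classical route, and your treatment of the corners (zero surface measure, or exhaustion by smooth subdomains) is adequate for the piecewise smooth setting. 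One point deserves emphasis: your remark that the boundary term $\int_{\partial\Omega}F(\phi)(x\cdot\nu)\,d\sigma$ vanishes only ``once the primitive is normalized so that $F$ vanishes at the boundary value of $\phi$'' is not a cosmetic aside but actually exposes a typo in the statement as printed. With $F(z)=\int_1^z f(s)\,ds$ one has $F(0)=-\int_0^1 f(s)\,ds$, which need not vanish, and since $\int_{\partial\Omega}(x\cdot\nu)\,d\sigma=n|\Omega|$ the identity as displayed acquires an extra term $nF(0)|\Omega|$; it is valid as stated precisely when $F(0)=0$, i.e.\ with the usual normalization $F(z)=\int_0^z f(s)\,ds$ (which is also the normalization under which the paper's applications, e.g.\ $T(\Omega)=\frac{1}{n+2}\int_{\partial\Omega}|\nabla u|^2(x\cdot\nu)\,d\sigma$ for $f\equiv1$, come out correctly). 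So your proof is sound; just state explicitly that you prove the identity for the primitive vanishing at $0$, or equivalently carry the constant $F(0)$ through and observe it must be zero for the displayed formula.
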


	In particular, if $u$ is the torsion function on a smooth domain $\Omega$, then 
	\begin{align}
		\label{boundarytorsion}
		T(\Omega):=\int_\Omega u\, dx=\frac{1}{n+2}\int_{\partial \Omega}|\nabla u(x)|^2 (x\cdot \nu) \, d\sigma.
	\end{align}
	
	If $v$ is the first eigenfunction of the Dirichlet Laplacian with $\int_\Omega v^2\, dx=1$, then 
	\begin{align}
		\label{boundarylambda1}
		\lambda_1(\Omega)=\frac{1}{2}\int_{\partial \Omega}|\nabla v(x)|^2 (x\cdot \nu) \, d\sigma.    
	\end{align}
By boundary regularity results near convex vertices (see \cite{Gri}), \eqref{boundarytorsion}-\eqref{boundarylambda1} also hold if $\Omega$ is a convex polygonal domain. These two formulas will be frequently used in this paper, as they translate integrals over the domain to boundary integrals.
	\vskip 0.2cm
	
	The next result, which is well-known and can be obtained by the moving plane method, is also often used in this paper to determine the sign of derivatives of $T(\cdot)$ and $\lambda_1(\cdot)$ along suitable flows. It is also used in \cite{FV19} to obtain rigidity results.
	
	\begin{theorem}
		\label{symmetrycomparison}
		Let $u$ satisfy
		\begin{align*}
			\begin{cases}
				-\Delta u=f(u)\quad &\mbox{in $\Omega$}\\
				u=0\quad &\mbox{on $\partial \Omega$}
			\end{cases}
		\end{align*}
	where $f:\mathbb{R}\to\mathbb{R}$ is a positive $C^1$ function, and $\Omega$ is a triangle, denoted by $\triangle_{ABC}$. Let $M$ be the midpoint of $AB$. For any $P\in int(BM)$, let $P'\in AM$ such that $|PM|=|MP'|$. If $|BC|>|AC|$, then $|\nabla u|(P')>|\nabla u|(P)$.   
	\end{theorem}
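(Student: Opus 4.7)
The plan is to apply the moving plane method across the perpendicular bisector $\ell$ of $AB$, which passes through $M$. Place coordinates so that $A=(-a,0)$, $B=(a,0)$ with $a>0$, and $C=(c_1,c_2)$ with $c_2>0$. A direct computation shows that the hypothesis $|BC|>|AC|$ is equivalent to $c_1<0$, so that $C$ lies strictly in the open left half-plane $\{x<0\}$. Write $R\colon(x,y)\mapsto(-x,y)$ for the reflection across $\ell$, and set $\Omega^{\pm}:=\Omega\cap\{\pm x>0\}$ and $D:=R(\Omega^+)$. The segment $BC$ meets $\ell$ at a single point $Q$, so $\Omega^+$ is the triangle $MBQ$ and $D=\triangle MAQ$. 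On the other hand, $\Omega^-$ is the quadrilateral $MACQ$, and since $c_1<0$ the vertex $C$ lies strictly on the opposite side of the chord $AQ$ from $M$. Hence $D\subsetneq\Omega^-$.

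Next I would set $v(x,y):=u(-x,y)$ on $D$ and $w:=v-u$. Because $u$ solves $-\Delta u=f(u)$, so does $v$, and thus $w$ satisfies a linear elliptic equation $-\Delta w=c(x,y)\,w$ on $D$ with bounded coefficient $c=(f(v)-f(u))/(v-u)$ (extended arbitrarily on $\{v=u\}$). I would then analyze $w$ along $\partial D$: on $\overline{MA}$ both $u$ and $v=u\circ R$ vanish because $R(\overline{MA})=\overline{MB}\subset\partial\Omega$; on $\overline{MQ}\subset\ell$ the reflection acts as the identity so $v=u$; on the remaining edge $\overline{AQ}=R(\overline{BQ})$, we have $v=u\circ R\equiv 0$ while $u>0$ in the relative interior, since this chord sits inside $\Omega^-$. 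Therefore $w\le 0$ on $\partial D$ with strict negativity on the relative interior of $\overline{AQ}$. The maximum principle applies (trivially for the torsion case $f\equiv1$; for $f(u)=\lambda_1 u$ one uses that $D\subsetneq\Omega$ forces $\lambda_1(D)>\lambda_1(\Omega)$, so that $-\Delta-\lambda_1$ is positive on $D$), and combined with the strong maximum principle it yields $w<0$ throughout $\mathrm{int}(D)$.

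Finally, I would invoke Hopf's boundary point lemma on the segment $\overline{MA}$ at the interior point $P'=(x_0,0)$. The outward normal to $D$ there is $-\hat{y}$, so $-\partial_y w(P')>0$. Unwinding, this reads $\partial_y u(R(P'))=\partial_y v(P')<\partial_y u(P')$, where $P:=R(P')\in\mathrm{int}(\overline{MB})$ is precisely the reflection of $P'$ across $M$ described in the statement. Since $u$ vanishes along $\overline{AB}$ the tangential derivative is zero there, while $\partial_y u>0$ by Hopf applied to $u$ itself; in particular $|\nabla u|=\partial_y u$ on $\overline{AB}$. Hence $|\nabla u|(P)<|\nabla u|(P')$, as claimed.

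The main technical step is the geometric inclusion $D\subset\Omega^-$: it is precisely the assumption $|BC|>|AC|$, equivalently $c_1<0$, that places $C$ strictly on the correct side of the chord $AQ$ and allows the reflected right half to sit inside the left half of $\Omega$. Once this inclusion is in hand, the definition $v=u\circ R$ on $D$ is legitimate and the rest of the argument reduces to a standard combination of the maximum principle and Hopf's lemma.
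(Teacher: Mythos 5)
Your argument is essentially the paper's proof: reflect the portion of $\Omega$ on the $B$-side of the perpendicular bisector of $AB$ (your $Q$ is the point the paper calls $D$), observe that $|BC|>|AC|$ forces the reflected triangle $\triangle MAQ$ to lie strictly inside $\Omega$ with the open chord $AQ$ in the interior, and conclude via the maximum principle and Hopf's lemma applied to $w=u\circ R-u$; your coordinate check that $c_1<0$ puts $C$ strictly on the far side of $AQ$ is exactly the geometric fact the paper reads off from its figure. The one caveat is that your single-reflection comparison justifies the maximum principle only for $f\equiv 1$ and $f(u)=\lambda_1 u$ (precisely the cases the paper actually uses), while the theorem is stated for general $f>0$; in that generality one should run the full moving-plane sweep from a line near $B$ up to the bisector, as in the Berestycki--Nirenberg reference the paper cites, rather than invoking the maximum principle on all of $D$ in one step.
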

	
	\begin{figure}[htp]
		\centering
		\begin{tikzpicture}[scale = 2.5]
			\pgfmathsetmacro\L{1/sqrt(3)};
			\pgfmathsetmacro\h{1};
			\pgfmathsetmacro\t{0.4};

			\draw[->] (-0.8,0)--(1.5,0) node[right] {$x$};
			\fill[gray, yellow] (0.2, 0) -- (0.2, 0.785365) -- (\L+\t, 0) -- cycle;
			\fill[gray, green] (0.2, 0) -- (0.2, 0.785365) -- (-\L, 0) -- cycle;
			
			\draw (-\L,0)--(\L+\t,0);
			\draw (\L+\t,0)--(0,\h);
			\draw (0,\h)--(-\L,0);
			\draw[thick,dotted] (0.2,0)--(0.2,1.3);
			\draw[thick,dotted] (0.2,0.785365)--(-\L,0);

			\draw[->] (0,-0.2)--(0,1.3) node[right] {$y$};
			
			\fill (-\L, 0) circle (0.02 ) node[below ] {\small$A$};
			\fill (-\L+\t, 0) circle (0.02 ) node[below ] {\small$P'$};
			\fill (\L, 0) circle (0.02 ) node[below ] {\small$P$};
			\fill (\L+\t, 0) circle (0.02 ) node[below right] {\small$B$};
			\fill (0, \h) circle (0.02 ) node[above right] {\small$C$};
			\fill (0.2, 0) circle (0.02 ) node[below] {\small$M$};
			\fill (0.2, 0.785365) circle (0.02 ) node[above right] {\small$D$};
			
			
		\end{tikzpicture}
		\caption{Comparison of the gradient norms at two points}
		\label{fig:ctm''}
	\end{figure}
	
	\begin{proof}
		Let $D$ be the intersection point of the perpendicular bisector of AB and the side BC. Since $|BC|>|AC|$, the reflection of $\triangle BDM$ about $DM$ strictly lies inside $\triangle ABC$ (see Figure \ref{fig:ctm''}). Therefore, it is standard by the moving plane method (see for example \cite{BN}, and similar argument is used in \cite{LY24}, \cite{4L24}, etc) and the Hopf lemma to conclude that $|\nabla u|(P')>|\nabla u|(P)$.
	\end{proof}
	
	The next theorem derives the evolution equation for the torsional rigidity and first eigenvalue of the Dirichlet Laplacian along a flow map generated by a smooth time-dependent vector field. Classical shape derivative formula along a flow is often presented at a particular time, say $t=0$, see for example \cite[Section 2.5]{Henrot} and \cite[Chapter 5]{HP05}, while the formula of the whole-time evolution equation along a flow is less commonly written in literature. Therefore, we also present the proof.

	\begin{theorem}
		\label{Tshapederivative}
		Let $\Omega$ be a smooth bounded domain in $\mathbb{R}^n$. Let $\eta(t,x): \mathbb R_+\times \mathbb{R}^n \to \mathbb{R}^n$ be a $C^\infty$ vector field with compact support in $\mathbb{R}^n$ (or more generally, satisfying a global Lipschitz condition with respect to $x$ and uniformly for $t$), and let $F_t(x):=F(t,x)$ be the flow map generated by $\eta$, i.e.,
		\begin{align*}
			\begin{cases}
				\frac{\partial}{\partial t}F(t,x)=\eta(t,F(t,x)), \quad t>0,\\
				F(0,x)=x.
			\end{cases}
		\end{align*}
		Then $F_t \in C^1(\mathbb{R}_+ \times \mathbb{R}^n; \mathbb{R}^n)$ is a diffeomorphism for each $t \ge 0$. Also, for any $t>0$, the shape derivatives of the torsional rigidity and the first eigenvalue exist and are given by
		\begin{align}
			\label{sikong1}
			\frac{d}{dt}T(F_t(\Omega)) &= \int_{\partial F_t(\Omega)}|\nabla u(t)|^2 \, \eta(t,x)\cdot \nu\, d\sigma, \\
			\label{sikong2}
			\frac{d}{dt}\lambda_1(F_t(\Omega)) &= -\int_{\partial F_t(\Omega)}|\nabla v(t)|^2 \, \eta(t,x)\cdot \nu\, d\sigma,
		\end{align}
		where:
		\begin{itemize}
			\item $u(t)$ is the torsion function on $F_t(\Omega)$, i.e., the unique solution to $-\Delta u = 1$ in $F_t(\Omega)$ with $u=0$ on $\partial F_t(\Omega)$.
			\item $v(t)$ is the first (positive) $L^2$-normalized eigenfunction on $F_t(\Omega)$, i.e., satisfying $-\Delta v = \lambda_1 v$ in $F_t(\Omega)$, $v=0$ on $\partial F_t(\Omega)$, and $\int_{F_t(\Omega)} v(t)^2\, dx = 1$.
			\item $\nu$ is the outward unit normal vector on $\partial F_t(\Omega)$.
		\end{itemize}
		The above results also hold if $F_t(\Omega),\, t\ge 0$ are all convex polygons.
	\end{theorem}
	
	\begin{proof}
		Since $\eta$ has uniform global Lipschitz condition, by ODE theory, $F_t: \mathbb{R}^n \to \mathbb{R}^n$ is a smooth diffeomorphism for each $t$. We define the pullback of functions to the fixed domain $\Omega$. For the torsion function, let
		\[
		\tilde{u}(t, x) = u(t)(F_t(x)), \quad x \in \Omega.
		\]
		The function $\tilde{u}(t, \cdot) \in H^1_0(\Omega)$ satisfies a perturbed Poisson equation on $\Omega$ whose coefficients depend smoothly on $t$ via the deformation gradient $DF_t$. Standard elliptic theory with parameters implies that the mapping $t \mapsto \tilde{u}(t, \cdot)$ is differentiable from $\mathbb{R}_+$ into $H^1_0(\Omega)$ (see, e.g., the framework in \cite{HP05}).
		
	 The shape derivative $u'(t) \in H^1(F_t(\Omega))$ is then defined by the relation
		\begin{align}
			\label{relation}
			u'(t)(F_t(x)) := \frac{d}{dt}\tilde{u}(t,x) - \nabla u(t)(F_t(x)) \cdot \eta(t, F_t(x)).
		\end{align}
		A crucial point is the regularity of $u(t)$ on the boundary.
		\begin{itemize}
			\item If $\Omega$ is smooth, then $F_t(\Omega)$ is smooth, and standard elliptic regularity yields $u(t) \in C^\infty(\overline{F_t(\Omega)})$, and hence $|\nabla u(t)|^2$ is continuous on the boundary.
			\item If $F_t(\Omega)$ is a convex polygon in $\mathbb{R}^2$, then from the seminal monograph \cite{Gri}, $u(t)\in H^2(F_t(\Omega))$. This also implies that $\nabla u(t)$ has a well-defined trace in $L^2(\partial F_t(\Omega))$, and the boundary integral in \eqref{sikong1} is meaningful.
		\end{itemize}
		The same regularity considerations hold for the eigenfunction $v(t)$, since $\lambda_1$ is simple.

		Now we prove \eqref{sikong1}. From integration by parts, we have
		\begin{align} 
			\label{fuxie0}T(F_t(\Omega))=\int_{F_t(\Omega)}2u(t)\, dx-\int_{F_t(\Omega)}|\nabla u(t)|^2\, dx.
		\end{align}
		
		Using the well-known formula (see for example \cite[Chapter 5]{HP05}):
		\begin{align*}
			\frac{\partial}{\partial t}\int_{F_t(\Omega)}f(t,x)\, dx=\int_{F_t(\Omega)}f_t(t,x)\, dx+\int_{\partial F_t(\Omega)} f(t,x)\eta(t,x)\cdot \nu \, d\sigma,
		\end{align*}
		we have
		\begin{align}
			\label{fuxie1}
			\frac{d}{d t}\int_{F_t(\Omega)}2u(t)\, dx=\int_{F_t(\Omega)}2u'(t)\, dx
		\end{align}and
		\begin{align*}
			&\frac{d}{dt}\int_{F_t(\Omega)}|\nabla u(t)|^2\, dx\\
			=&\int_{F_t(\Omega)}2\nabla u(t)\nabla u'(t)\, dx+\int_{\partial F_t(\Omega)}|\nabla u(t)|^2 \eta(t,x)\cdot \nu\, d\sigma\\
			=& \int_{F_t(\Omega)} 2(-\Delta u(t)) u'(t)\, dx+\int_{\partial F_t(\Omega)}2u'(t) \partial_\nu u(t)\, d\sigma+\int_{\partial F_t(\Omega)}|\nabla u(t)|^2 \eta(t,x)\cdot \nu\, d\sigma.
		\end{align*}
		Since $-\Delta u(t)=1$ in $F_t(\Omega)$ and 
		\begin{align*}
			u(t,F_t(x))=0 \quad \mbox{on $\partial \Omega$} \Rightarrow u'(t)+  (\eta\cdot \nu)\partial_\nu u(t)=0\quad \mbox{on $\partial F_t(\Omega)$},
		\end{align*}
		we have
		\begin{align}
			\label{fuxie2}
			\frac{d}{dt}\int_{F_t(\Omega)}|\nabla u(t)|^2\, dx=\int_{F_t(\Omega)} 2u'(t)\, dx-\int_{\partial F_t(\Omega)}|\nabla u(t)|^2 \eta(t,x)\cdot \nu\, d\sigma.
		\end{align}
		By \eqref{fuxie0}-\eqref{fuxie2}, we obtain the desired formula for the shape derivative of torsional rigidity.
		
		Next, we prove \eqref{sikong2}. Since
		\begin{align*}
			\int_{F_t(\Omega)} v(t)^2\, dx=1,
		\end{align*}
		by taking the derivative we have
		\begin{align}
			\label{0lambdaconstriant}
			0= \int_{F_t(\Omega)} 2v(t)v'(t)\, dx+\int_{\partial F_t(\Omega)}v(t)^2 \eta(t,x)\cdot \nu\, d\sigma=\int_{F_t(\Omega)} 2v(t)v'(t)\, dx,
		\end{align}where $v'$ is the shape derivative of $v$.
		Using that on $\partial F_t(\Omega)$, 
		\begin{align}
			\label{bdv}
			v'(t)=-(\eta\cdot \nu)\partial_\nu v(t),
		\end{align}
		and by \eqref{0lambdaconstriant}, we have
		\begin{align*}
			\frac{d}{dt}\lambda_1(F_t(\Omega))=&\frac{d}{dt}\int_{F_t(\Omega)}|\nabla v(t)|^2\, dx\\
			=& \int_{F_t(\Omega)}2\nabla v(t) \cdot \nabla v'(t)\, dx+\int_{\partial F_t(\Omega)}|\nabla v(t)|^2\eta\cdot \nu\, d\sigma\\
			=&\lambda_1(F_t(\Omega))\int_{F_t(\Omega)}v(t) v'(t)\, dx+\int_{\partial F_t(\Omega)}2\partial_\nu v(t) v'(t)\, d\sigma+\int_{\partial F_t(\Omega)}|\nabla v(t)|^2\eta\cdot \nu\, d\sigma\\
			=& -\int_{\partial F_t(\Omega)}|\nabla v(t)|^2 \eta(t,x)\cdot \nu\, d\sigma.
		\end{align*}
		\end{proof}
	
	This theorem tells us that the derivatives of $T(\cdot)$ and $\lambda_1(\cdot)$ along a flow only depend on the normal component of the velocity field restricted on the boundary of the domain.
	
	\section{Monotonicity under the shortest height stretching flow and tallest height compressing flow}
	In this section, we prove Theorem \ref{yangsheng2}, Theorem \ref{yangsheng2'} and Theorem \ref{rhombustheoremc}.

	\begin{proof}[Proof of Theorem \ref{yangsheng2}]
		Without loss of generality, we may assume that $|CO|=1$. Let $\Delta_t=\triangle ABC_t$, $$F_t(x,y)=(x,(1+t)y)$$ and$$\eta(t,x,y)=\left(0,\frac{y}{1+t}\right).$$
		Then, $F_t(\Omega)=\Delta_t$, $\Omega_t=\Delta_t/\sqrt{1+t}$ and
		\begin{align*}
			\frac{\partial}{\partial t}F_t(x,y)=\eta\left(t,F_t(x,y)\right).
		\end{align*}
		To prove Theorem \ref{yangsheng2}, it suffices to prove $\tfrac{d}{dt}T(\Omega_t)>0$ and $\tfrac{d}{dt}\lambda_1(\Omega_t)<0$ for $t\in (-1,t_1)$. Let $u(t)=u_{\Delta_t}$ be the torsion function on $\Delta_t$. Then, by Theorem \ref{poho} and Theorem \ref{Tshapederivative}, we have
		\begin{align*}
			\frac{d}{dt}T(\Omega_t)=&\frac{d}{dt}\left(\frac{1}{(1+t)^2}T(F_t(\Omega))\right)\\
			=&-2\frac{1}{(1+t)^3}T(\Delta_t)+\frac{1}{(1+t)^2}\int_{\partial \Delta_t} |\nabla u_{\Delta_t}|^2 \eta(t,x,y)\cdot \nu\, ds\\
			=& -\frac{1}{2(1+t)^3}\int_{\partial \Delta_t} |\nabla u(t)(x,y)|^2 (x,y)\cdot \nu\, ds+\frac{1}{(1+t)^3}\int_{\partial \Delta_t} |\nabla u(t)(x,y)|^2 (0,y)\cdot \nu\, ds\\
			=& \frac{1}{2(1+t)^3}\int_{\partial \Delta_t} |\nabla u(t)(x,y)|^2 (-x,y)\cdot \nu \, ds.
		\end{align*}
		Let $\beta_t=\angle ABC_t$ and $\gamma_t=\angle BAC_t$, as illustrated in Figure \ref{fig:add1}. Here and througout the paper, $\angle$ means the angle.
		\begin{figure}[htp]
			\centering
			\begin{tikzpicture}[scale = 1.5]
				\pgfmathsetmacro\d{180/pi};

				\fill[green,opacity=0.4] (-1,0)--(3/2,0)--(-1/2,1)--cycle;
				\fill[yellow,opacity=0.4] (0,2)--(3/2,0)--(-1/2,1)--cycle;
				\draw (-1,0)--(0,2); \draw (0,2)--(2,0);\draw (-1,0)--(2,0);
				
				\draw[->] (-2,0)--(3,0) node[below right] {$x$};
				\draw[->] (0,-1)--(0,3) node[above left] {$y$};
				
				\node[below left] at (-1,0) {\small $A$};
				\node[above left] at (0,2) {\small $C_{t}$};
				\node[below right] at (2,0) {\small $B$};
				\node[below right] at (0,0) {\small $O$};
				
				\fill (-1,0) circle (0.02);
				\fill (0,2) circle (0.02);
				\fill (2,0) circle (0.02);
				
				\draw[samples=500,black, domain=0:pi/3, variable=\t] plot ({-1+0.2*cos(\t*\d)},{0.2*sin(\t*\d)});
				\node at (-0.7,0.15) {$\gamma_{t}$};
				
				\draw[samples=500,black, domain=15*pi/20:pi, variable=\t] plot ({2+0.2*cos(\t*\d)},{0.2*sin(\t*\d)});
				\node at (1.6,0.15) {$\beta_{t}$};
				
				\fill (1,1) circle (0.03) node[right] {$M_{t}$};
				\draw[samples=500,thick,dotted, domain=-1:2, variable=\t] plot ({\t},{1*\t});
				\node[right] at (2,1.8) {\small $y-(\tan \beta_{t})x=0$};
				
				\fill (-1/2,1) circle (0.03) node[left] {$N_{t}$};
				\draw[samples=500,thick,dotted, domain=-1:0.5, variable=\t] plot ({\t},{-2*\t});
				\node[left] at (-1,1) {\small $y+(\tan \gamma_{t})x=0$};
				
				\fill (0.5,1.5) circle (0.02) node[right] {$p_{t}'$};
				\fill (1.5,.5) circle (0.02) node[right] {$p_{t}$};
				
				\fill (-0.3,1.4) circle (0.02) node[left] {$q_{t}'$};
				\fill (-0.7,0.6) circle (0.02) node[left] {$q_{t}$};

			\end{tikzpicture}
			\caption{For $-1<t<t_1$, $|AB|>|BC_t|\ge |AC_t|$. $y=x\tan \beta_t$ and $y=-x\tan \gamma_t$ pass through the midpoints of $BC_t$ and $AB_t$, respectively. $|p_tM_t|=|p_t'M_t|$, $|\nabla u(t)(p_t)|<|\nabla u(t)(p_t')|$, $|q_tN_t|=|q_t'N_t|$, $|\nabla u(t)(q_t)|<|\nabla u(t)(q_t')|$.}
			\label{fig:add1}
		\end{figure}
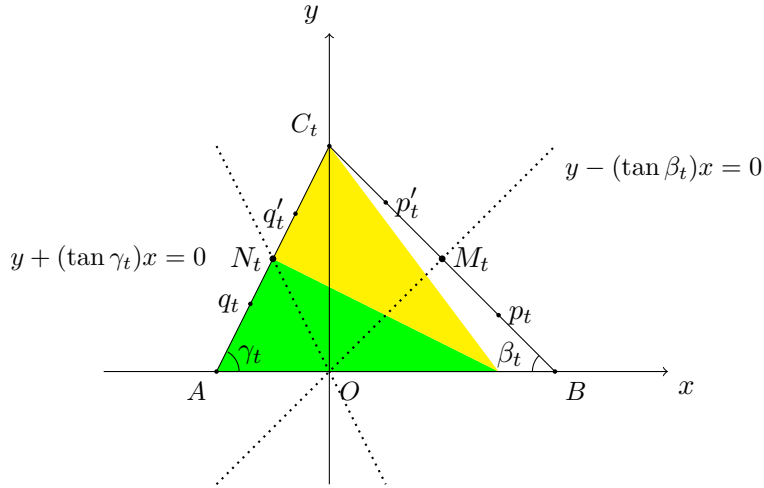
		
On $BC_t$, we have
		\begin{align*}
			(-x,y)\cdot (\sin\beta_t,\cos\beta_t)=\cos \beta_t (y-x\tan\beta_t).
		\end{align*}
		On $AC_t$, we have
		\begin{align*}
			(-x,y)\cdot (-\sin\gamma_t,\cos\gamma_t)=\cos \gamma_t (y+x \tan \gamma_t).
		\end{align*}
		Hence
		\begin{align*}
			&\frac{d}{dt}T(\Omega_t)\\
			=&\frac{\cos \beta_t}{2(1+t)^3}\int_{BC_t} |\nabla u(t)(x,y)|^2 (y-x\tan \beta_t) \, ds+\frac{\cos \gamma_t}{2(1+t)^3}\int_{AC_t} |\nabla u(t)(x,y)|^2 (y+x\tan \gamma_t) \, ds.
		\end{align*}
		Let $M_t$ be the midpoint of $BC_t$. Since $y=(\tan \beta_t)x$ is exactly the line passing through the origin and $M_t$, for any point $p_t=(x,y)\in BM_t$ and $p_t'=(x',y')\in M_tC_t$ with $|p_tM_t|=|p_t'M_t|$, we have 
		\begin{align*}
			0>y-x\tan\beta_t=-(y'-x'\tan\beta_t).
		\end{align*}
		Also, for $-1<t<t_1$, $|AB|>|AC_t|$, thus by Theorem \ref{symmetrycomparison}, $|\nabla u(t)|(p_t)<|\nabla u(t)|(p_t')$.
		Hence 
		\begin{align*}
			\int_{BC_t} |\nabla u(t)(x,y)|^2 (y-x\tan \beta_t) \, ds=\int_{BM_t}\left(|\nabla u(t)(p_t)|^2-|\nabla u(t)(p_t')|^2\right)(y-x\tan \beta_t)\, ds>0.
		\end{align*}
		Similarly, let $N_t$ be the midpoint of side $AC_t$. Since $y=-(\tan \gamma_t)x$ is the line passing through the origin and $N_t$, for any point $q_t=(x,y)\in AN_t$ and $q_t'=(x',y')\in N_tC_t$ with $|q_tN_t|=|q_t'N_t|$, we have 
		\begin{align*}
			0>y+x\tan\gamma_t=-(y'+x'\tan\gamma_t).
		\end{align*}
		Again by Theorem \ref{symmetrycomparison}, for any $-1<t<t_1$,
		\begin{align*}
			\int_{AC_t} |\nabla u(t)(x,y)|^2 (y+x\tan \gamma_t) \, ds=\int_{AN_t}\left(|\nabla u(t)(q_t)|^2-|\nabla u(t)(q_t')|^2\right)(y+x\tan \gamma_t)\, ds>0.
		\end{align*}
		Hence $\tfrac{d}{dt}T(\Omega_t)>0$ for any $t\in (-1,t_1)$. 
		
		That $\tfrac{d}{dt}\lambda_1(\Omega_t)<0$ for $t\in (-1,t_1)$ can be also derived in a similar way, thanks to Theorems \ref{poho}-\ref{Tshapederivative}. (See also the proof of Theorem \ref{yangsheng1} where we write the full details.) Therefore, we finish the proof of Theorem \ref{yangsheng2}.
	\end{proof}

	Next, we prove Theorem \ref{yangsheng2'}.

	\begin{proof}[Proof of Theorem \ref{yangsheng2'}]
		Without loss of generality, we assume that $|OC|=1$. Let 
		\begin{align*}
			F_t(x,y)=\left(x,(1-t)y\right),\quad \eta(t,x,y)=\left(0,-\frac{y}{1-t}\right).
		\end{align*}
		Then $F_t$ is generated by the vector field $\eta$, $\triangle_{ABC_t}=F_t(\triangle_{ABC})$ and we let 
		\begin{align*}
			\Omega_t=\frac{1}{\sqrt{1-t}}F_t(\Omega).
		\end{align*}
		Let $u(t)=u_{\Omega_t}$ be the torsion function on $\Omega_t$. By the similar calculation of the shape derivative as in the proof of Theorem \ref{yangsheng2}, we have
		\begin{align*}
			\frac{d}{dt}T(\Omega_t)=&\frac{\cos \beta_t}{2(1-t)^3}\int_{BC_t} |\nabla u(t)(x,y)|^2(x\tan\beta_t-y)\, ds\\
			&+\frac{\cos \gamma_t}{2(1-t)^3}\int_{ AC_t} |\nabla u(t)(x,y)|^2(-x\tan\gamma_t-y)\, ds,
		\end{align*}
		where $\beta_t=\angle C_tBA$ and $\gamma_t=\angle C_t AB$. The similar argument in the proof of Theorem \ref{yangsheng2} ensures that $\tfrac{d}{dt}T(\Omega_t)>0$, for any $t\in (-\infty, t_2)$. This is equivalent to 
		\begin{align*}
			\frac{d}{dt}\left(\frac{T(\triangle_{ABC_t})}{|\triangle_{ABC_t}|^2}\right)>0.
		\end{align*}
		
		Thanks to \eqref{boundarylambda1}, the case of $\lambda_1(\cdot)$ is similar (see also the proof of Theorem \ref{yangsheng1}) and we omit the proof.
	\end{proof} 
	
	\vskip 0.2cm
	Applying the same stretching flow argument, we can obtain the monotonicity as well as rigidity result on rhombuses.
	
	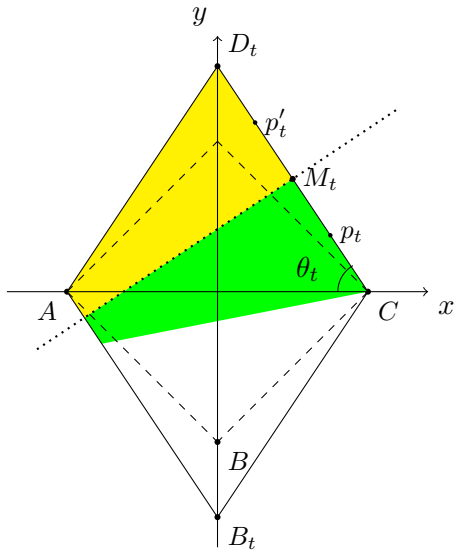
\begin{figure}[htp]
		\centering
		\begin{tikzpicture}[scale = 2]
			\pgfmathsetmacro\a{1};
			\pgfmathsetmacro\t{0.5};
			\pgfmathsetmacro\d{180/pi};
			\pgfmathsetmacro\at{(1+\t)*\a};
			\pgfmathsetmacro\xt{1/(\a/\at+\at/\a)*(\a*\a/(2*\at)-\at-\at/2)};
			\pgfmathsetmacro\yt{-\at/\a*(\xt+\a)};
			\pgfmathsetmacro\rx{1/(\a/(2*\at)+\at/(2*\a))*(-\at+\a*\a/\at)};
			\pgfmathsetmacro\ry{-\at/\a*(\rx+\a)};

			\fill (-\a,0) circle(0.02) node[below left] {\small $A$};
			\fill (\a,0) circle(0.02) node[below right] {\small $C$};
			\fill (0,-\a) circle(0.02) node[below right] {\small $B$};
			\fill (0,\a) circle(0.02) node[above right] {\small $D$};
			
			\fill[yellow,opacity=0.4] (\xt,\yt)--(-\a,0)--(0,\at)--(\a/2,\at/2)--cycle;
			\fill[green,opacity=0.4] (\xt,\yt)--(\rx,\ry)--(\a,0)--(\a/2,\at/2)--cycle;
			\draw (-\a,0)--(0, \at);
			\draw (\a,0)--(0, \at);
			\draw (-\a,0)--(0,-\at);
			\draw (\a,0)--(0,-\at);
			
			\draw[->] (-1.4*\a,0)--(1.4*\a,0) node[below right] {$x$};
			\draw[->] (0,-1.7*\a)--(0,1.7*\a) node[above left] {$y$};
			
			\draw[dashed] (-\a,0)--(0, \a);
			\draw[dashed] (\a,0)--(0, \a);
			\draw[dashed] (-\a,0)--(0,-\a);
			\draw[dashed] (\a,0)--(0,-\a);
			
			\fill (0,-\at) circle(0.02) node[below right] {\small $B_{t}$};
			\fill (0,\at) circle(0.02) node[above right] {\small $D_{t}$};
			\fill (\a/2,\at/2) circle(0.02) node[right] {\small $M_{t}$};
			
			\fill (\a/4,3*\at/4) circle(0.015) node[right] {\small $p'_{t}$};
			\fill (3*\a/4,\at/4) circle(0.015) node[right] {\small $p_{t}$};

			\draw[samples=500,thick,black,dotted, domain=-1.2*\a:1.2*\a, variable=\s] plot ({\s},{\a/\at*(\s-\a/2)+\at/2});

			\draw[samples=500,black, domain=2*pi/3:pi, variable=\t] plot ({\a+0.2*cos(\t*\d)},{0.2*sin(\t*\d)});
			\node at (0.6*\a,0.15*\a) {$\theta_{t}$};

		\end{tikzpicture}
		\caption{Stretch the diagonal $BD$ of the square $\square ABCD$. During the stretching process, $T(\cdot)/|\cdot|^2$ is decreasing, while $\lambda_1(\cdot)|\cdot|$ is increasing.}
		\label{fig:ctmrhom}
	\end{figure}

	\begin{proof}[Proof of Theorem \ref{rhombustheoremc}]
		Since the functionals $T$ and $\lambda_1$ are invariant under rotation, we may assume without loss of generality that the rhombuses $\Omega_q$ have vertices $A=(-a,0)$, $B_t=(0,-a(1+t))$, $C=(a,0)$, $D_t=(0,a(1+t))$, where $a>0$ and $t=q-1$.

		Let $F_t(x,y)=(x,(1+t)y)$ and 
		\begin{align*}
			\eta(t,x,y)=\left(0,\frac{y}{1+t}\right).
		\end{align*}
		Then $F_t$ is the flow generated by $\eta$. 
		Let $u(t)$ be the torsion function on $\Omega_t$, and thus $u(t)$ is axially symmetric. By \eqref{boundarytorsion} and symmetry, we have
		\begin{align*}
			T(\Omega_t)=\int_{CD_t}|\nabla u(t)|^2 (x,y)\cdot (\sin \theta_t,\cos\theta_t)\, ds,
		\end{align*}
		where $\theta_t=\angle ACD_t$.
		
		Hence by \eqref{boundarytorsion} and Theorem \ref{Tshapederivative}, we have
		\begin{align*}
			\frac{d}{dt}\left(\frac{T(\Omega_t)}{|\Omega_t|^2}\right)=&\frac{1}{|\Omega_0|^2}\frac{d}{dt}\left((1+t)^{-2}T(F_t(\Omega_0))\right)\\
			=&\frac{1}{|\Omega_0|^2}\cdot (-2)(1+t)^{-3}\int_{CD_t}|\nabla u(t)(x,y)|^2 (x,y)\cdot (\sin \theta_t,\cos \theta_t)\, ds\\
			&+\frac{1}{|\Omega_0|^2}\cdot (1+t)^{-2}\cdot 4\int_{CD_t}|\nabla u(t)(x,y)|^2 \left(0,\frac{y}{1+t}\right)\cdot (\sin \theta_t,\cos \theta_t)\, ds\\
			=& \frac{2}{|\Omega_0|^2 (1+t)^3}\cos \theta_t\int_{CD_t}|\nabla u(t)(x,y)|^2(y-x\tan\theta_t)\, ds.
		\end{align*}
		
		Let $M_t$ be the midpoint of $CD_t$. For any $p_t=(x,y)\in CM_t$ and $p'_t=(x',y')\in M_tD_t$ with $|p_tM_t|=|p_t'M_t|$, we have by the geometry of the 
		rhombus and the reflection method (similar to the argument in \cite[Propositions 3.1-3.2]{4L24} and \cite[Theorem 5.1]{LY24}) to derive
		\begin{align*}
			|\nabla u(t)(p_t)|\ge |\nabla u(t)(p'_t)|,
		\end{align*}
		with $"="$ holding if and only if $t=0$.
		
		Also, $-(y'-x'\tan \theta)=y-x\tan \theta<0$. Hence
		\begin{align*}
			\frac{d}{dt}\left(\frac{T(\Omega_t)}{|\Omega_t|^2}\right)=\frac{2\cos \theta_t}{|\Omega_0|^2 (1+t)^3}\int_{CM_t}\left(|\nabla u(t)(p_t)|^2-|\nabla u(t)(p_t')|^2\right)(y-x\tan\theta_t)\, ds\le 0,
		\end{align*}
		with the equality holding if and only if $t=0$.
		
		The proof of monotonicity of $\lambda_1(\Omega_t)|\Omega_t|$ is similar (see also the proof of Theorem \ref{yangsheng1}), so we omit it. 
	\end{proof}

	\section{monotonicity along leg-stretching flows and angle-bisector flows}
	
	In this section, we shall prove Theorem \ref{yangsheng1} and its consequences.
	
	\begin{proof}[Proof of Theorem \ref{yangsheng1}]
	Without loss of generality, we may assume that $\Omega=\triangle_{ABC}$ with $|AB|=|AC|=1$, $A=(0,0)$ and $B=(1,0)$. Let
	\begin{align*}
		\Omega_t=\frac{1}{\sqrt{1+t}}\triangle_{AB_tC},
	\end{align*}
	and thus $|\Omega_t|=|\Omega|$. Then, to prove Theorem \ref{yangsheng1}, it suffices to prove the monotonicity of $T(\Omega_t)$ and $\lambda_1(\Omega_t)$. For an illustration, see Figure \ref{fig:ctm} below.

	\begin{figure}[htp]
		\centering
		\begin{tikzpicture}[scale = 2.5]
			
			\fill (0, 0) circle (0.02 ) node[below ] {\small$A$};
			\fill (1, 0) circle (0.02 ) node[below ] {\small$B$};
			\fill (0.707, 0.707) circle (0.02 ) node[above right] {\small$C$};
			\fill (1.2, 0) circle (0.02 ) node[below] {\small$B_{t}$};

			\draw[->] (-0.3,0)--(1.5,0) node[right] {$x$};
			\draw[->] (0,-0.2)--(0,1.1) node[right] {$y$};
			
			\draw (0,0)--(1.2,0);
			\draw[thick,dotted] (1,0)--(0.707,0.707);
			\draw (0.707,0.707)--(1.2,0);
			\draw (0.707,0.707)--(0,0);

			\node at (0.5,0.2) {\small $\Omega$};
			
			\draw[->,thick] (1.6, 0.4)--(2.6,0.4);
			
			\node[above] at (2.1, 0.5) {\small $\Omega_t=\sqrt{\tfrac{|AB|}{|AB_t|}}\triangle_{AB_tC}$};
			
			
			\draw[->] (2.7,0)--(4.5,0) node[right] {$x$};
			\draw[->] (3,-0.2)--(3,1.1) node[right] {$y$};
			
			\draw (3,0)--(3+1.2,0);
			\draw (3+1.2,0)--(3+0.707,0.707);
			\draw (3,0)--(3+0.833*0.707,0.833*0.707);
			\draw [thick,dotted](3+0.833*0.707,0.833*0.707)--(3+0.833*1.2,0);
			\draw (3+0.707,0.707)--(3,0);
			
			\fill (3, 0) circle (0.02 ) node[below ] {\small$A$};
			\fill (4.2, 0) circle (0.02 ) node[below ] {\small$B_t$};
			\fill (3+0.707, 0.707) circle (0.02 ) node[above right] {\small$C$};
			
			\node at (3.5,0.2) {\small $\Omega_t$};
			
		\end{tikzpicture}
		\caption{Stretching one leg and scaling to fix the area}
		\label{fig:ctm}
	\end{figure}

		Let $\alpha=\angle A$ and $\beta_t=\angle AB_tC$. 
		We also let 
		\begin{align*}
			F_t(x,y)=\left((1+t)x-t(\cot \alpha)y,y \right)
		\end{align*}
		and
		\begin{align*}
			\eta(t,x,y)=\left(\frac{x-y\cot\alpha}{1+t},0\right).
		\end{align*}
		Hence $F_t$ maps $\triangle_{ABC}$ to $\triangle_{AB_tC}$, and
		\begin{align*}
			\frac{\partial}{\partial t}F_t(x,y)=\eta\left(t,F_t(x,y)\right).
		\end{align*}
		Let $D_t=\triangle_{AB_tC}$ and $u(t)$ be the torsion function on $D_t$. Since
		\begin{align*}
			T(\Omega_t)=\frac{1}{(1+t)^2}T(D_t),
		\end{align*}
		by Theorem \ref{Tshapederivative}, we have
		\begin{align*}
			\frac{d}{dt}T(\Omega_t)=-2(1+t)^{-3}T(D_t)+(1+t)^{-2}\int_{\partial D_t}|\nabla u(t)(x,y)|^2 \eta(t,x,y)\cdot \nu\, ds.
		\end{align*}
		Note that on $AB_t\cup AC$, $(x,y)\cdot \nu=0$ and $\eta(t,x,y)\cdot \nu=0$. Therefore, by \eqref{boundarytorsion}, which writes the torsional rigidity in the expression of a boundary integral, we have
		\begin{align*}
			\frac{d}{dt}T(\Omega_t)=&-\frac{1}{2}(1+t)^{-3}\int_{B_tC}|\nabla u(t)(x,y)|^2 (x,y)\cdot (\sin\beta_t,\cos\beta_t)\, ds\\
			&+(1+t)^{-3}\int_{B_tC}|\nabla u(t)(x,y)|^2(x-y\cot\alpha,0)\cdot (\sin\beta_t,\cos\beta_t)\, ds\\
			=&\frac{1}{2}(1+t)^{-3}\int_{B_tC}|\nabla u(t)(x,y)|^2\sin\beta_t \left(x-y\left(2\frac{\cos\alpha}{\sin \alpha}+\frac{\cos\beta_t}{\sin \beta_t}\right)\right)\, ds.
		\end{align*}
		Let $M_t$ be the midpoint of $B_tC$ and $\theta_t=\angle M_tAB_t \in (0,\pi/2)$. 
		By the law of sine,
		\begin{align*}
			|AB_t|=\frac{\sin(\alpha+\beta_t)}{\sin \beta_t}, \quad |B_tM_t|=\frac{1}{2}|B_tC|=\frac{\sin \alpha}{2\sin\beta_t}.
		\end{align*}
		Again by the law of sine,
		\begin{align*}
			\frac{\sin(\theta_t+\beta_t)}{|AB_t|}=\frac{\sin\theta_t}{|B_tM_t|}.
		\end{align*}
	Hence
		\[
		\frac{\sin(\alpha+\beta_t)}{\sin(\theta_t+\beta_t)} 
		= \frac{\sin\alpha}{2\sin\theta_t}.
		\]
That is,
		\[
		2(\sin\alpha\cos\beta_t+\cos\alpha\sin\beta_t)\sin\theta_t
		= \sin\alpha(\sin\theta_t\cos\beta_t+\cos\theta_t\sin\beta_t).
		\]
After simplification, we have
		\[
		\cot\theta_t = 2\frac{\cos\alpha}{\sin\alpha} + \frac{\cos\beta_t}{\sin\beta_t}.
		\]
		
		Hence
		\begin{align*}
			\frac{d}{dt}T(\Omega_t)=\frac{1}{2}(1+t)^{-3}\sin \beta_t\cot\theta_t \int_{B_tC}|\nabla u(t)(x,y)|^2\left(x\tan\theta_t-y\right)\, ds.
		\end{align*}
		Note that for any $p_t=(x,y) \in B_tM_t$, we let $p_t'=(x',y')\in M_tC$ such that $|p_t'M_t|=|p_tM_t|$. When $t>0$, $|AB_t|>|AC|$, and thus by symmetry and Theorem \ref{symmetrycomparison},
		we have
		\begin{align*}
			0<x\tan\theta_t-y=-(x'\tan\theta_t-y'),\quad |\nabla u(p_t)|<|\nabla u(p_t')|.
		\end{align*}
		Therefore, for $t>0$, we have
		\begin{align*}
			\frac{d}{dt}T(\Omega_t)=\frac{1}{2}(1+t)^{-3}\sin \beta_t\cot\theta_t \int_{B_tM_t}\left(|\nabla u(t)(p_t)|^2-|\nabla u(p_t')|^2\right)\left(x\tan\theta-y\right)\, ds<0.
		\end{align*}
		
		Next, we prove the monotonicity of $\lambda_1(\Omega_t)$. Let $v(t)$ be the first eigenfunction of the Dirichlet Laplacian on $D_t$, with $\int_{D_t} v(t)^2\, dx=1$. Using the scaling relation
		\begin{align*}
			\lambda_1(\alpha \Omega)=\alpha^{-2}\lambda_1(\Omega),
		\end{align*}
		and by \eqref{boundarylambda1}, we have
		\begin{align*}
			\frac{d}{dt}\lambda_1(\Omega_t)=& \frac{d}{dt}\left((1+t)\lambda_1(D_t)\right)\\
			=& \lambda_1(D_t)-(1+t)\int_{\partial D_t}|\nabla v(t)(x,y)|^2\eta(t,x,y)\cdot \nu\, ds\\
			=& \frac{1}{2}\int_{\partial D_t}|\nabla v(t)(x,y)|^2 (x,y)\cdot \nu\, ds-\int_{\partial D_t}|\nabla v(t)(x,y)|^2(x-y\cot\alpha,0)\cdot \nu\, ds\\
			=&\frac{1}{2}\int_{B_tC}|\nabla v(t)(x,y)|^2\sin\beta_t \left(y\left(2\frac{\cos\alpha}{\sin \alpha}+\frac{\cos\beta_t}{\sin \beta_t}\right)-x\right)\, ds.
		\end{align*}
		By the similar argument as before, we derive $\tfrac{d}{dt}\lambda_1(F_t(\Omega))>0$.
	\end{proof}
	
	The sign analysis in the proof above is illustrated in Figure \ref{fig:add2} below.

	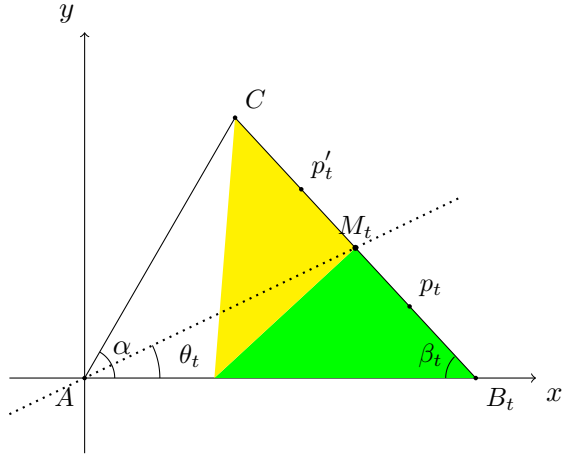
\begin{figure}[htp]
		\centering
		\begin{tikzpicture}[scale = 1.5]
			\pgfmathsetmacro\d{180/pi};
			\pgfmathsetmacro\S{sin(60)};
			\pgfmathsetmacro\C{cos(60)};
			\pgfmathsetmacro\m{\C+1.3};
			\pgfmathsetmacro\mx{\C+1.3};
			\pgfmathsetmacro\my{\S};
			\pgfmathsetmacro\re{\my*(\S/(\C-1.3))+\mx};
			\pgfmathsetmacro\px{1.2*\m};
			\pgfmathsetmacro\pxx{0.8*\m};
			\pgfmathsetmacro\py{\S/(\C-1.3)*(\px-2.6)};
			\pgfmathsetmacro\pyy{\S/(\C-1.3)*(\pxx-2.6)};
			
			\draw[->] (-0.5,0)--(3,0) node[below right] {$x$};
			\draw[->] (0,-0.5)--(0,2.3) node[above left] {$y$};
			
			\fill[green,opacity=0.4] (\re,0)--(2.6,0)--(\mx,\my)--cycle;
			\fill[yellow,opacity=0.4] (\re,0)--(2*\C,2*\S)--(\mx,\my)--cycle;
			
			\draw (0,0)--(2*\C,2*\S);
			\draw (2*\C,2*\S)--(2.6,0);
			\fill (0,0) circle (0.015) node[below left] {\small $A$};
			\fill (2*\C,2*\S) circle (0.015) node[above right] {\small $C$};
			\fill (2.6,0) circle (0.015) node[below right] {\small $B_{t}$};
			
			\draw[samples=500,black, domain=0:pi/3, variable=\t] plot ({0.2*cos(\t*\d)},{0.2*sin(\t*\d)});
			\node at (0.25,0.2) { $\alpha$};
			
			\draw[samples=500,black, domain=3*pi/4:pi, variable=\t] plot ({2.6+0.2*cos(\t*\d)},{0.2*sin(\t*\d)});
			\node at (2.3,0.15) {\small $\beta_{t}$};
			
			\draw[samples=500,thick,dotted, domain=-0.5:2.5, variable=\t] plot ({\t},{\S/(1.3+\C)*\t});
			
			\fill (1.3+\C,\S) circle (0.02) node[above] {\small $M_{t}$};
			
			\draw[samples=500,black, domain=0:pi/7, variable=\t] plot ({0.5*cos(\t*\d)},{0.5*sin(\t*\d)});
			\node at (0.7,0.15) {\small  $\theta_{t}$};
			
			\fill (\pxx,\pyy) circle (0.015) node[above right] {\small $p'_{t}$};
			\fill (\px,\py) circle (0.015) node[above right] {\small $p_{t}$};

		\end{tikzpicture}
		\caption{For any $t>0$, $|AB_t|>|AC|$. $M_t$ is the midpoint of 
			$B_tC$, $\theta_t=\angle B_tAM_t$. If $|p_t M_t|=|p_t'M_t|$, then $|\nabla u(t)(p_t)|<|\nabla u(t)(p_t')|$.}
		\label{fig:add2}
	\end{figure}

	Next, we shall apply Theorem \ref{yangsheng1} to prove Corollaries \ref{yangshengcor1}-\ref{yangshengcor2}.
	
	\begin{proof}[Proof of Corollary \ref{yangshengcor1}]
	It suffices to prove the monotonicity of $\widetilde{T}(\cdot):=T(\cdot)/|\cdot|^2$ along the inward angle bisector flow, and the case of $\lambda_1(\cdot)$ is similar.
	
	Fix $0\le s<t\le t_0$.  
	Set $\triangle_s:=\triangle_{BA_sC}$ and $\triangle_t:=\triangle_{BA_tC}$.  
	We must show $\widetilde{T}(\triangle_s)<\widetilde{T}(\triangle_t)$.

	Draw the line $CA_t$ and let it meet the line segment $BA_s$ at a point $D$. Clearly,
	\begin{align}
			\label{hangzhou1}
			|BD|<|BA_s|,\qquad |CD|>|CA_t|.
	\end{align}

	In $\triangle BDC$, observe that $\angle BDC<\angle BA_tC$.  Since $t<t_0$, in $\triangle BA_tC$, we have $|BC|<|A_tB|$, and thus $\angle BA_tC<\angle BCA_t$. In view that $\angle BCA_t=\angle BCD$ because $D$ is on the ray $CA_t$, we have 
	\[
	\angle BDC<\angle BA_tC<\angle BCD .
	\]
	
		Therefore,
	\begin{align}
		\label{hangzhou2}
		|BC|<|BD|.
	\end{align}

	Triangles $\triangle_s$ and $\triangle_{BDC}$ share the angle at $B$.  
	For this fixed angle, by the first inequality in \eqref{hangzhou1} and \eqref{hangzhou2},
	\[
	\frac{|BA_s|}{|BC|}>\frac{|BD|}{|BC|}>1.
	\]
Therefore, Theorem \ref{yangsheng1} yields
	\begin{align}
		\label{hangzhou3}
			\widetilde{T}(\triangle BDC)>\widetilde{T}(\triangle_s).
	\end{align}
Triangles $\triangle BDC$ and $\triangle_t$ share the angle at $C$.  
	For this fixed angle, since $|A_tC|>|BC|$ and the second inequality in \eqref{hangzhou1}, we have
	\[
	\frac{|CD|}{|BC|}>\frac{|CA_t|}{|BC|}>1.
	\]
	Applying Theorem \ref{yangsheng1} again gives
\begin{align}
	\label{hangzhou4}
	\widetilde{T}(\triangle_{ BDC})<\widetilde{T}(\triangle_t).
\end{align}
Combining \eqref{hangzhou3} and \eqref{hangzhou4}, we obtain
	\[
	\widetilde{T}(\triangle_s)<\widetilde{T}(\triangle_{BDC})<\widetilde{T}(\triangle_t),
	\]
	which is exactly $\widetilde{T}(\triangle_s)<\widetilde{T}(\triangle_t)$.  
	Since $s<t$ is arbitrary, the function $t\mapsto\widetilde{T}(\triangle_{BA_tC})$ is strictly increasing on $[0,t_0]$.
	\end{proof}
	
The proof of Corollary \ref{yangsheng2'} will be similar to the previous argument, and we only give a sketch below.
\begin{proof}[Proof of Corollary \ref{yangshengcor2}]
	When $0<s<t<t_1$, we extend the line segment $BC_s$ and let it meet the line segment $AC_t$ at a point $D$. By the assumptions, we have
	\[1< \frac{|AB|}{|AC_t|}< \frac{|AB|}{|AD|}.\]
	By Theorem \ref{yangsheng1}, we have
	\begin{align}
		\label{hangzhou5}
	\widetilde{T}(\triangle_{ABC_t})>	\widetilde{T}(\triangle_{ABD}).
	\end{align}
	Also, since $|AB|>|BC_t|$, we have
	\[\angle ADB>\angle AC_tB>\angle BAC_t=\angle BAD,\]
	and thus $|AB|>|BD|$. Clearly $|BD|>|BC_s|$, and we therefore have
	\[1<\frac{|AB|}{|BD|}<\frac{|AB|}{|BC_s|}.\]
	Hence by Theorem \ref{yangsheng1},	\begin{align}
		\label{hangzhou6}
		\widetilde{T}(\triangle_{ABD})>	\widetilde{T}(\triangle_{ABC_s}).
	\end{align}
	Combining \eqref{hangzhou5} and \eqref{hangzhou6}, we have
	\begin{align*}
			\widetilde{T}(\triangle_{ABC_t})>\widetilde{T}(\triangle_{ABC_s}).
	\end{align*}
	This finishes the proof.
\end{proof}
	
Last, we remark that the proof of Corollary \ref{yangshengcor1} does not depend specifically on the motion of vertex $A$ along the angle bisector. The essential condition is that during the motion, the ordering of the side lengths remains unchanged. Consequently, the same argument yields the following stronger result:

\begin{corollary}
	\label{thm:general-motion}
	Let $\triangle ABC$ be a triangle satisfying $|AB| > |AC| > |BC|$.  
	Fix the side $BC$ and let $\{A_t\}_{t\in[0,T]}$ be a continuous family of points such that:
	\begin{enumerate}
		\item $A_0 = A$;
		\item For each $0 \le s < t \le T$, the point $A_t$ lies in the interior of $\triangle A_s B C$;
		\item For all $t \in [0,T]$, the side lengths satisfy
		\[
		|A_t B| > |A_t C| > |BC|.
		\tag{H}
		\]
	\end{enumerate}
	Then the normalized torsional rigidity
	\[
	t \longmapsto \frac{T(\triangle BA_tC)}{|\triangle BA_tC|^2}
	\]
	is strictly increasing on $[0,T]$.
\end{corollary}

Similarly, the following result can also be derived from the proof of Corollary \ref{yangshengcor2}:
\begin{corollary}
	\label{thm:general-motion}
	Let $\triangle ABC$ be a triangle satisfying $|AB| > |AC| \ge |BC|$.  
	Fix the side $BC$ and let $\{C_t\}_{t\in[0,T]}$ be a continuous family of points such that:
	\begin{enumerate}
		\item $C_0 = C$;
		\item For each $0 \le s < t \le T$, the point $C_s$ lies in the interior of $\triangle A B C_t$;
		\item For all $t \in [0,T]$, the side lengths satisfy
		\[
		|A B| > |AC_t | \ge  |BC_t|.
		\tag{H}
		\]
	\end{enumerate}
	Then the normalized torsional rigidity
	\[
	t \longmapsto \frac{T(\triangle ABC_t)}{|\triangle ABC_t|^2}
	\]
	is strictly increasing on $[0,T]$.
\end{corollary}
These two corollaries also imply Theorem \ref{yangsheng2} and Theorem \ref{yangsheng2'}.

	\section{A New proof of the Saint-Venant inequality via the mean curvature flow}
Throughout this and the next section, we let $B_1$ denote the unit ball in $\mathbb{R}^n$, and a point in $\mathbb{R}^n$ is denoted by a single letter $x$.

 The Saint-Venant inequality states that for any bounded domain $\Omega \subset \mathbb{R}^n$, we have
	\begin{align*}
		\frac{T(\Omega)}{|\Omega|^{\frac{n+2}{n}}}\le  \frac{T(B_1)}{|B_1|^{\frac{n+2}{n}}},
	\end{align*}
	where the equality holds if and only if $\Omega$ is a ball up to a set of zero capacity. 
	
	In this section, we use the mean curvature flow to give a new proof of this inequality in the smooth convex setting. First, we state the following lemma, which is motivated from the argument used in \cite{Reilly}.
	
	\begin{lemma}
		\label{tminequality}
		Let $\Omega$ be a bounded smooth domain in $\mathbb{R}^n$, $H$ be the mean curvature on $\partial \Omega$ and $u$ be the torsion function on $\Omega$. Then,
		\begin{align*}
			\int_{\partial \Omega}|\nabla u|^2 H\, d\sigma\le \frac{n-1}{n}|\Omega|,
		\end{align*}
		with the equality holding if and only if $\Omega$ is a ball.
	\end{lemma}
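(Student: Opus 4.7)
The plan is to deduce this inequality from Reilly's integral identity applied to the torsion function $u$, in the same spirit as Reilly's proof of the soap bubble theorem referenced in the statement. First I would invoke Reilly's formula: for any $f \in C^2(\bar\Omega)$ in Euclidean space,
\[
\int_\Omega \bigl[(\Delta f)^2 - |\nabla^2 f|^2\bigr]\, dx = \int_{\partial\Omega} \bigl[2 f_\nu \Delta_{\partial\Omega} f + II(\nabla_{\partial\Omega} f, \nabla_{\partial\Omega} f) + H f_\nu^2 \bigr]\, d\sigma,
\]
where $\nu$ is the outer unit normal and $H = \operatorname{tr} II$. Applying this to $u$, the Dirichlet condition $u = 0$ on $\partial\Omega$ kills both $\nabla_{\partial\Omega} u$ and $\Delta_{\partial\Omega} u$, and forces $|\nabla u|^2 = u_\nu^2$ on $\partial\Omega$, so Reilly's identity collapses to
\[
\int_{\partial\Omega} H |\nabla u|^2\, d\sigma = \int_\Omega \bigl[(\Delta u)^2 - |\nabla^2 u|^2\bigr]\, dx.
\]

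Next I would estimate the right-hand side. Since $-\Delta u = 1$, the first term equals $|\Omega|$. For the Hessian term, I would use the pointwise algebraic inequality $(\operatorname{tr} A)^2 \le n |A|^2$ (Cauchy--Schwarz on symmetric matrices applied to $A = \nabla^2 u$), giving $|\nabla^2 u|^2 \ge 1/n$ in $\Omega$ and therefore $\int_\Omega |\nabla^2 u|^2\, dx \ge |\Omega|/n$. Combining these would yield
\[
\int_{\partial\Omega} H |\nabla u|^2\, d\sigma \le |\Omega| - \frac{|\Omega|}{n} = \frac{n-1}{n}|\Omega|,
\]
which is exactly the claim.

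For the rigidity, I would observe that equality forces the pointwise Cauchy--Schwarz on the Hessian to saturate everywhere, so $\nabla^2 u \equiv -\tfrac{1}{n} I$ throughout $\Omega$. Integrating, $u(x) = -\frac{|x-x_0|^2}{2n} + C$ for some $x_0 \in \mathbb{R}^n$ and some constant $C$, and since $u = 0$ on $\partial\Omega$ while $u > 0$ in $\Omega$ by the maximum principle, $\Omega$ must be the open ball of radius $\sqrt{2nC}$ centered at $x_0$. Conversely, a direct check on a ball yields equality.

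The main potential obstacle is essentially bookkeeping rather than anything deep: I would need to confirm that the paper's convention for $H$ coincides with the trace (not the average) of $II$, since only the trace convention produces the constant $(n-1)/n$ on the right. A sanity check on $\Omega = B_1$---where $H = n-1$ and $|\nabla u|^2 = 1/n^2$ on $\partial B_1$, so the left-hand side equals $\frac{n-1}{n^2}\cdot n\omega_n = \frac{n-1}{n}|B_1|$---both pins down the convention and confirms saturation. Once this is settled, the proof reduces to the short chain: Reilly's identity $\Rightarrow$ matrix Cauchy--Schwarz $\Rightarrow$ rigidity via the explicit form of $u$.
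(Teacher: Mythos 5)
Your proposal is correct and follows essentially the same route as the paper: the paper derives the Dirichlet special case of Reilly's identity by hand (integrating $\Delta\bigl(\tfrac12|\nabla u|^2\bigr)$ and using $\Delta u = u_{\nu\nu} + H u_\nu + \Delta_{\partial\Omega}u$ with the trace convention for $H$), then applies the same matrix Cauchy--Schwarz bound $|\nabla^2 u|^2 \ge (\Delta u)^2/n$ and the same rigidity argument via $u$ being a quadratic polynomial. Quoting Reilly's formula as a black box, as you do, is just a packaged version of that computation, so the two proofs coincide in substance.
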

	
	\begin{proof}
		On the one hand, by the torsion equation and the Schwarz inequality, we obtain
		\begin{align}
			\label{1.1}
			\int_{\Omega}\Delta \left(\frac{1}{2}|\nabla u|^2\right)\, dx=&\int_{\Omega} |\nabla ^2 u|^2 \, dx+ \int_{\Omega} \nabla(\Delta u)\cdot \nabla u\, dx\nonumber\\
			 \ge &\int_\Omega \frac{(\Delta u)^2}{n}\, dx = \frac{1}{n}|\Omega|.
		\end{align}
		On the other hand, we have
		\begin{align}
			\label{jieshi}
			\int_{\Omega}\Delta \left(\frac{1}{2}|\nabla u|^2\right)\, dx
			=& \int_{\partial \Omega} \nabla^2u \nabla u \nu \, d\sigma\nonumber\\
			=& \int_{\partial \Omega} u_{\nu} u_{\nu\nu}\, d\sigma\quad \mbox{since $\nabla u \perp \partial \Omega$} \nonumber\\
			=& \int_{\partial \Omega} u_{\nu}(-1-Hu_{\nu})\, d\sigma\nonumber\\
			=& |\Omega|-\int_{\partial \Omega}Hu_{\nu}^2\, d\sigma.
		\end{align}
		In the above, we have used the torsion equation and the well-known formula that on $\partial \Omega$,
		\begin{align}
			\Delta u=  u_{\nu\nu}+H\frac{\partial u}{\partial \nu}+\Delta_{\partial \Omega}u.
		\end{align}
		Hence by \eqref{1.1} and \eqref{jieshi}, we have
		\begin{align*}
			\int_{\partial \Omega}|\nabla u|^2 H\, d\sigma=\int_{\partial \Omega}u_\nu^2 H\, d\sigma\le \frac{n-1}{n}|\Omega|.
		\end{align*}
		The equality holds if and only if in $\Omega$, $\nabla^2 u$ has $n$ same eigenvalues, all equal to $1/n$. That is, in $\Omega$, $\nabla^2 u=\tfrac{1}{n}I_n$ where $I_n$ is the identity matrix. Hence via integration, $u$ must be a quadratic function $\tfrac{1}{2n}|x-x_0|^2+b$ with $b\in \mathbb{R}$. Threfore, by the Dirichlet condition of $u$, $\Omega$ must be a ball.
	\end{proof}

	Now we are ready to prove Theorem \ref{svinequalitynintro}.
	
	\begin{proof}[Proof of Theorem \ref{svinequalitynintro}]
		Let $F_t$ be the mean curvature flow, that is,
		\begin{align*}
			\partial_t F_t(x)=-H\nu_{F_t(x)},\quad F_0(x)=x,
		\end{align*}
		where $\nu_{F_t(x)}$ is the unit outer normal to the hypersurface $F_t(\partial \Omega)$. Let $\Omega_t$ be the domain enclosed by $F_t(\partial \Omega)$.
		
	Due to Huisken's seminal work \cite{Huisken84}, 
	the mean curvature flow starting from a smooth closed strictly convex hypersurface $\partial\Omega$ 
	contracts it to a point in finite time $t_0$ while preserving convexity. 
	To describe the asymptotic shape, one rescales the flow by a time-dependent factor so that the enclosed volume remains constant. 
	More precisely, if we set $\tilde\Omega_t := \lambda(t) \Omega_t$ with $\lambda(t)>0$ chosen so that $|\tilde\Omega_t| = |\Omega|$, 
	then $\partial\tilde\Omega_t$ converges in $C^\infty$ to $\partial B_1$ as $t\to t_0^-$.

		By scaling invariance and a result of Chenais (see \cite[Section 2.3]{Henrot} which applies also to the torsional rigidity), we thus have
		\begin{align}
			\label{limitvalue}
			\lim_{t\rightarrow t_0^-}
			\frac{T(\Omega_t)}{|\Omega_t|^{\frac{n+2}{n}}}=\frac{T(B_1)}{|B_1|^{\frac{n+2}{n}}}= \frac{\omega_n^{-\frac{2}{n}}}{n(n+2)},
		\end{align}
		where $\omega_n$ is the volume of the unit ball in $\mathbb{R}^n$.
		
		Let $g(t)=T(\Omega_t)-C_n|\Omega_t|^{(n+2)/n}$, where $C_n$ is the constant in the right-hand side of \eqref{limitvalue}. By Theorem \ref{Tshapederivative}, for any $0\le t<t_0$, we have
		\begin{align}
			\label{3dg'}
			g'(t)=-\left(\int_{\partial \Omega_t}|\nabla u(t)|^2 H\, d\sigma\right) +C_n\frac{n+2}{n}|\Omega_t|^{\frac{2}{n}}\int_{\partial \Omega_t}H\, d\sigma,
		\end{align}
		where in the above we have used the formula
		\begin{align*}
			\frac{d}{dt}|\Omega_t|=-\int_{\partial \Omega_t}H\, d\sigma.
		\end{align*}
		This formula can also be derived directly from Theorem \ref{Tshapederivative} by choosing the flow with velocity $\eta=-H\nu$ on the boundary. By the classical Minkowski inequality for convex bodies (see for example \cite{Sch}) and the isoperimetric inequality, we have
		\begin{align}
			\label{lowerbound}
			\int_{\partial \Omega_t}H\, d\sigma\ge & (n-1)|\mathbb{S}^{n-1}|^{\frac{1}{n-1}}|\partial \Omega_t|^{\frac{n-2}{n-1}}\nonumber\\
			\ge & (n-1)(n\omega_n)^{\frac{1}{n-1}}\left(n\omega_n^{\frac{1}{n}}|\Omega_t|^{\frac{n-1}{n}}\right)^{\frac{n-2}{n-1}}\nonumber\\
			=& n(n-1)\omega_n^{\frac{2}{n}}|\Omega_t|^{\frac{n-2}{n}}.
		\end{align}
		Therefore, by \eqref{limitvalue}, \eqref{3dg'}, \eqref{lowerbound} and Lemma \ref{tminequality}, 
		the following differential inequality holds for any $0\le t<t_0$:
		\begin{align}
			\label{g'inequality}
			g'(t)\ge& -\frac{n-1}{n}|\Omega_t|+C_n(n+2)(n-1)\omega_n^{\frac{2}{n}}|\Omega_t|=0.
		\end{align}
	\end{proof}
	
	As a consequence, we have
	
	\begin{corollary}
		\label{svinequalityn}
		Let $\Omega$ be a smooth and convex domain in $\mathbb{R}^n$. Then,
		\begin{align*}
			\frac{T(\Omega)}{|\Omega|^{\frac{n+2}{n}}}\le \frac{T(B_1)}{|B_1|^{\frac{n+2}{n}}},
		\end{align*}where the equality holds if and only if $\Omega$ is a ball. 
	\end{corollary}
	
	\begin{proof}
		We use the notations in the proof of Theorem \ref{svinequalitynintro}. By the theorem, $g(t)$ is an increasing function for $t\in [0,t_0)$. Since $g(t_0^-)=0$, we have in particular $g(0)\le 0$, and this implies the Saint-Venant inequality
		\begin{align*}
			\frac{T(\Omega)}{|\Omega|^{\frac{n+2}{n}}}\le \frac{T(B_1)}{|B_1|^{\frac{n+2}{n}}}.
		\end{align*}
		
		If the equality case holds, then $g(t)\equiv 0$ for $t\in [0,t_0)$ and thus in particular $g'(0)=0$. From the proof of Theorem \ref{svinequalitynintro}, this entails that the equality case holds in Lemma \ref{tminequality}. Therefore, $\Omega$ must be a ball.
		
	\end{proof}
	
	When $n=2$, the mean curvature flow becomes the curve shortening flow. Then, the convexity assumption can be relaxed to a simply connected assumption on the initial domain, to guarantee that there are no singularities during the evolution, see \cite{Grayson}. Also, the total mean curvature along $\partial \Omega_t$ is always $2\pi$, and thus the proof of Theorem \ref{svinequalitynintro} still works. Therefore, when $n=2$, Theorem \ref{svinequalitynintro} also holds for smooth simply connected bounded domains.
	
	\begin{remark}
		While recent advances in the analysis of singularities in mean curvature flow might eventually allow the convexity hypothesis in Theorem \ref{svinequalitynintro} to be relaxed, such an investigation lies beyond the scope of the present paper. Instead, we focus on a more pressing open question that arises directly from our work: whether the normalized torsional rigidity $T(\cdot)/|\cdot|^{(n+2)/n}$ is monotone increasing along the mean curvature flow, even for initially smooth convex planar domains. It is important to note that this property is not equivalent to the weak monotonicity established in \eqref{mcfweak}; its analysis is taken up in Section 6, where we examine a related shape functional $Q(\cdot)$, as stated in \eqref{qfunctional}.
				
		The subtlety of this question is underscored by the fact that analogous monotonicity properties for other classical functionals often depend sensitively on the choice of flow and the dimension. A paradigmatic case is the isoperimetric ratio $P(\cdot)/|\cdot|^{(n-1)/n}$. It is known to decrease along \emph{normalized} versions of both the mean curvature flow and the inverse mean curvature flow in all dimensions \cite{GL15, GL21}. However, its behavior under the \emph{standard (unnormalized)} flows differs: monotonicity holds for the curve shortening flow in two dimensions ($n=2$) \cite{Gage} but fails in higher dimensions \cite{M21}. (In this context, we note that Proposition \ref{IMCFproof} establishes the monotonicity of the isoperimetric ratio under the \emph{unnormalized} inverse mean curvature flow in any dimension—a result that, to our knowledge, has not been explicitly stated elsewhere.) In stark contrast, the monotonicity of the normalized torsional rigidity $T(\cdot)/|\cdot|^{(n+2)/n}$ under \emph{any} of these flows (normalized or unnormalized) remains a completely open problem, even in the planar case.
	\end{remark}
	
	At the end of the section, we also mention that it is possible to find another flow which involves the torsion function, to guarantee that $T(\cdot)/|\cdot|^{(n+2)/n}$ is monotone, yet the cost requires us to prove the convergence of convex domains to a ball up to scaling, along the flow. This would be another interesting topic and deserves further research. See section 9 below.  
	
\section{Extremal properties of the functional $Q(\cdot)$ given by \eqref{qfunctional}}

Let $\Omega$ be a smooth convex domain in $\mathbb{R}^n$, and let $u(t)$ be the torsion function over $\Omega_t$, where  $\Omega_t$ is the family of domains evolving by the mean curvature flow with initial data $\Omega_0=\Omega$. Then by direct computation, that \begin{align*}
	\frac{d}{dt}\left(\frac{T(\Omega_t)}{|\Omega_t|^{(n+2)/n}}\right)\ge 0 \quad \mbox{for all $0<t<T$},
\end{align*} is equivalent to 
\begin{align*}
	Q(\Omega_t)\le \frac{n+2}{n}=Q(B_1),\, \mbox{for all $0<t<T$},
\end{align*}
where $Q(\cdot)$ is defined in \eqref{qfunctional}. This motivates us to study the functional $Q(\Omega)$, which is scaling invariant in the sense that $Q(t\Omega)=Q(\Omega)$ for any $t>0$, and we expect and conjecture that among all bounded convex domains, $Q(\cdot)$ achieves its maximum only at round domains at least in two dimensions. 

To prove the global maximality of balls to $Q(\cdot)$ is rather challenging as explained in the introduction. So far, we can prove some partial extremal results related to this direction. First, we have:

\begin{proposition}
	For any bounded convex smooth domain $\Omega$ in $\mathbb{R}^n$, $Q(\Omega)$ is uniformly bounded from the above.
\end{proposition}

\begin{proof}
	Let $u$ be the torsion function over $\Omega$. Then, we have
	$$Q(\Omega)=\frac{|\Omega|\int_{\partial \Omega}|\nabla u|^{2}H d\sigma}{T(\Omega)\int_{\partial \Omega}H\, d\sigma}\leq \frac{|\Omega|\|\nabla u\|^{2}_{L^{\infty}(\partial \Omega)}}{T(\Omega)}< \frac{2|\Omega|\|u\|_{L^{\infty}(\Omega)}}{T(\Omega)},$$
	where we used the non-negativity of $H$ since $\Omega$ is convex and the following classical estimate via $P$-function (see \cite[Eq. 6.12]{Sperb})
	\begin{align}
		\label{unigradientbound}
		|\nabla u_{\Omega}|^{2}< 2\|u\|_{L^{\infty}(\Omega)}.
	\end{align}
	The upper bound (though not sharp in general)
	\begin{align}
		\label{henrotbound}
	\frac{|\Omega|\|u\|_{L^{\infty}(\Omega)}}{T(\Omega)}\leq (n+1)^2	
	\end{align}
	was given by \cite[Theorem 2.6]{TLM1} by exploring the concavity of $u^{1/2}$. Summarizing above inequalities, we have the upper bound
	$$Q(\Omega)< 2(n+1)^2.$$
\end{proof}

Let $u=u_\Omega$ be the torsion function in $\Omega$. Observe that by \eqref{jieshi}, we have 
\begin{align*}
	\int_{\partial \Omega}|\nabla u|^{2}H d\sigma=|\Omega|-\int_{\Omega} |\nabla^2 u|^2\, dx.
\end{align*}
Hence when $n=2$, and in view that $-\Delta u=1$, we have
\begin{align*}
		\int_{\partial \Omega}|\nabla u|^{2}H d\sigma=|\Omega|-\int_{\Omega} \left((\Delta u)^2-2det(D^2 u)\right)\, dx=2\int_{\Omega}det(D^2 u)\, dx.
	\end{align*}
Also, since $\int_{\partial \Omega}H\, d\sigma=2\pi$ when $n=2$, we have
\begin{align}
	\label{revisedQfunctional}
	Q(\Omega)=\frac{|\Omega|\int_{\Omega} det(D^2 u)\, dx}{\pi T(\Omega)}.
\end{align}
From this expression, $Q(\Omega)$ can be explicitly computed when $\Omega$ is enclosed by an ellipse, and thus the following proposition is obtained.

\begin{proposition}
	Among bounded domains enclosed by ellipses, balls uniquely maximize $Q(\cdot)$.
\end{proposition}

\begin{proof}
Since the functional $Q(\cdot)$ is scaling invariant, we just need to consider the elliptic planar domains with area $\pi$. Let $\Omega_{\lambda}\, (0<\lambda\le 1)$ be the family of such domains defined as $\Omega_{\lambda}:=\{(x,y)\in \R^{2}: \lambda^{-2}x^{2}+\lambda^{2}y^{2}<1\}.$
	One can check that the torsion function in $\Omega_{\lambda}$ is 
	\begin{align*}
		u_{\Omega_{\lambda}}(x,y)=\frac{1-\lambda^{-2}x^{2}-\lambda^{2}y^{2}}{2(\lambda^{-2}+\lambda^{2})}.
	\end{align*}
	Directly calculation yields
	\begin{align*}
		\det \Big(D^{2}u_{\Omega_{\lambda}}(x,y)\Big)=(\lambda^{-2}+\lambda^{2})^{-2}.
	\end{align*}
	Integrating $u_{\Omega_{\lambda}}$ and $\det(D^{2}u_{\Omega_{\lambda}})$ in $\Omega_{\lambda}$, we have
	\begin{align*}
		&T(\Omega_{\lambda})=\int_{\Omega_{\lambda}}u_{\Omega_{\lambda}}\,dx=\frac{\pi}{4(\lambda^{-2}+\lambda^{2})},\\
		&\int_{\Omega_\lambda}\det(D^{2}u_{\Omega_\lambda})\,dx=\frac{\pi}{(\lambda^{-2}+\lambda^{2})^{2}}.
	\end{align*}
	Therefore, we have
	$$Q(\Omega_{\lambda})=\frac{|\Omega_{\lambda}|\int_{\Omega_\lambda}\det(D^{2}u_{\Omega_\lambda})\,dx}{\pi T(\Omega_{\lambda})}=\frac{4}{\lambda^{2}+\lambda^{-2}}.$$
	Thus $Q(\Omega_{\lambda})$ is strictly increasing to $Q(\Omega_{1})=Q(B_1)$ as $\lambda$ goes from $0$ to $1$.
\end{proof}
	
To study the local optimality of shapes, similar to the framework of \cite{HLL22}, we adopt the method of perturbing domains along flow maps, which can simplify the computation of shape derivatives. We first recall some terminologies. 

Let \(\eta \in C_0^\infty(\R^n, \R^n)\) be a smooth vector field. We define the associated flow map \(F_t(x) = F(t,x)\) as the solution to
\[
\begin{cases}
	\frac{d}{dt} F(t,x) = \eta(F(t,x)), & t \in (-\varepsilon, \varepsilon), \\[8pt]
	F(0,x) = x.
\end{cases}
\]
Then \(F_t\) is a local diffeomorphism for \(|t|\) sufficiently small. We say that \(F_t\) (or \(\eta\)) preserves the volume of a domain \(\Omega\) if \(|F_t(\Omega)| = |\Omega|\) for all small \(t\). We say that $F_t$ is \textit{a local translation map} of $\Omega$ if there exists a constant vector $V\in \mathbb{R}^n$ such that on $\partial \Omega$, $\eta \cdot \nu=V\cdot \nu$, where $\nu$ is the unit outer normal to $\partial \Omega$.

Let \(\mathcal{E}\) be a \(C^2\) shape functional. We say that \(\Omega\) is \textit{stationary} for \(\mathcal{E}\) under volume constraint if
\[
\left.\frac{d}{d t} \right|_{t=0} \mathcal{E}(F_t(\Omega)) = 0
\]
for every smooth flow map \(F_t\) that preserves the volume of \(\Omega\).  

We say that \(\Omega\) is a \textit{strict local maximizer} for \(\mathcal{E}\) under volume constraint if \(\Omega\) is stationary and, for every volume‑preserving flow map \(F_t\) that is not a local translation map of $\Omega$,
\[
\left.\frac{d^2}{d t^2}\right|_{t=0} \mathcal{E}(F_t(\Omega)) < 0.
\]

With these notions we can state the following local optimality result.

\begin{theorem}
	\label{thm:main}
	Let $B_1 \subset \mathbb{R}^2$ be the unit disk of centered at the origin. Then $B_1$ is a strict local maximizer to $Q(\cdot)$.
\end{theorem}

In order to determine the local maximality of ball to $Q(\cdot)$, we first need the following first and second variation formula of the functional \begin{align}
	\label{dfunctional}
D(\Omega):=\int_{\Omega} det(D^2 u_\Omega)\, dx.	
\end{align}
	
\begin{proposition}
	\label{pro:DM}
	Let $B_{1}\subset \R^{2}$ be the unit disk and $F_{t}(\cdot)$ be the map generated by a smooth time-dependent volume-preserving vector field $\eta$ with compact support. We denote $\Omega_{t}:=F_{t}(B_{1})$. Then we have
	\begin{align}
		\label{eq:d1}
		\frac{d}{dt}\Big|_{t=0}D(\Omega_{t})=0,
	\end{align}
	and 
	\begin{align}
		\label{eq:d2}
		\frac{d^{2}}{dt^{2}}\Big|_{t=0}D(\Omega_{t})=2\int_{B_{1}}\det(D^{2}h)dx, 
	\end{align}
	where $h$ is the harmonic function in $B_{1}$ satisfies the boundary condition
	$$h+\nabla u_{B_{1}}\cdot \eta=0, \quad \mbox{on $\partial B_{1}$}.$$
\end{proposition}

	For abbreviation, we simply denote the torsion function in $\Omega_{t}$  by $u^{t}$ instead of $u(t)$ as in the proof of Theorem \ref{Tshapederivative}, and throughout this section, we adopt the following convention: 
	a dot denotes the shape derivative with respect to the flow parameter $t$. That is,
	$$\dot u^t = u'(t),\qquad \ddot u^t = u''(t),$$
	where $u'(t)$ is as in the proof of Theorem \ref{Tshapederivative} and $u''(t)$ is defined in a similar manner. 

\begin{proof}
	Let $\sigma_{t}$ be the volume element on $\partial \Omega_{t}$. By applying the Hadamard formula, we have
	\begin{align*}
		\frac{d}{dt}D(\Omega_{t})&=\int_{\Omega_{t}}\dot{u}^{t}_{11}u^{t}_{22}+u^{t}_{11}\dot{u}^{t}_{22}-2u^{t}_{12}\dot{u}^{t}_{12}\,dx+\int_{\partial \Omega_{t}}\det(D^{2}u^{t})\eta\cdot \nu \,d\sigma_{t}\\
		&=\int_{\Omega_{t}}\dot{u}^{t}_{11}u^{t}_{22}+u^{t}_{11}\dot{u}^{t}_{22}-2u^{t}_{12}\dot{u}^{t}_{12}+\det(D^{2}u^{t})\div \eta+\eta \cdot\nabla \det(D^{2}u^{t})\, dx,
	\end{align*}
	where we used the divergence theorem. Applying the Hadamard formula again, we have
	\begin{align*}
		\frac{d^{2}}{d t^{2}}D(\Omega_{t})&=\int_{\Omega_{t}}\ddot{u}^{t}_{11}u^{t}_{22}+2\dot{u}^{t}_{11}\dot{u}^{t}_{22}+u^{t}_{11}\ddot{u}^{t}_{22}-2(\dot{u}^{t}_{12})^{2}-2u_{12}^{t}\ddot{u}_{12}^{t}\,dx\\
		&+\int_{\Omega_{t}}\Big(\dot{u}^{t}_{22}\nabla u_{11}^{t}+u^{t}_{22}\nabla \dot{u}_{11}^{t}+\dot{u}^{t}_{11}\nabla u^{t}_{22}+u^{t}_{11}\nabla \dot{u}^{t}_{22}-2\dot{u}^{t}_{12}\nabla u^{t}_{12}-2u^{t}_{12}\nabla \dot{u}^{t}_{12}\Big)\cdot \eta\, dx\\
		&+\int_{\Omega_{t}}\Big(\dot{u}^{t}_{11}u^{t}_{22}+u^{t}_{11}\dot{u}^{t}_{22}-2u^{t}_{12}\dot{u}^{t}_{12}\Big)\div \eta\, dx+\int_{\partial \Omega_{t}}\Big(\dot{u}^{t}_{11}u^{t}_{22}+u^{t}_{11}\dot{u}^{t}_{22}-2u^{t}_{12}\dot{u}^{t}_{12}\Big) \eta\cdot \nu\, d\sigma\\
		&+\int_{\partial \Omega_{t}}\Big(\det(D^{2}u^{t}) \div \eta +\eta \cdot \nabla \det(D^{2}u^{t})\Big)\eta\cdot \nu\, d\sigma.
	\end{align*}
	When $t=0$, $u^{0}(x)=\frac{1}{4}(1-|x|^{2})$, and thus 
	\begin{align*}
		u^{0}_{11}=u^{0}_{22}=-\frac{1}{2},\quad \mbox{and}\quad u^{0}_{12}=0 \qquad \mbox{in $B_{1}$},
	\end{align*}
	Since $\dot{u}$ and $\ddot{u}$ are harmonic functions, we have
	\[\dot{u}^{0}_{11}u^{0}_{22}+u^{0}_{11}\dot{u}^{0}_{22}=-\frac{1}{2}\Delta \dot{u}^{0}=0,\]
	$$\ddot{u}_{11}^{0}u_{22}^{0}+u^{0}_{11}\ddot{u}^{0}_{22}=-\frac{1}{2}(\Delta \ddot{u}^{0})=0,$$
	and 
	$$\Big(u_{22}^{0}\nabla \dot{u}^{0}_{11}+u^{0}_{11}\nabla \dot{u}_{22}^{0}\Big)\cdot \eta=-\frac{1}{2}\nabla (\Delta \dot{u}^{0})\cdot \eta=0.$$
Also, since $\eta$ is volume-preserving, it satisfies the following two equalities
\begin{align}
	\label{volumepreserving}
\int_{\partial \Omega_{t}}\eta\cdot \nu_{\partial \Omega_{t}}d\sigma_{t}=0 \quad\mbox{and}\quad \int_{\partial \Omega_{t}}\div(\eta)\eta\cdot \nu d\sigma_{t}=0.	
\end{align}

	Taking these identities into the first and second derivative formulas of $D(\Omega_t)$ at $t=0$, we obtain
	\begin{align}
		\label{eq:d1}
		\frac{d}{dt}\Big|_{t=0}D(\Omega_{t})=0,
	\end{align}
	and 
	\begin{align}
		\label{eq:d2}
		\frac{d^{2}}{dt^{2}}\Big|_{t=0}D(\Omega_{t})=2\int_{B_{1}}\det(D^{2}\dot{u}^{0})dx. 
	\end{align}
	Since $u^t=0$ on $\partial \Omega_t$, $\dot{u}^0$ satisfies
	$$\dot{u}^{0}+\nabla u^{0}\cdot \eta=0, \quad \mbox{on $\partial B_{1}$}.$$
	Hence the harmonic function $h$ in the proposition coincides with $\dot{u}^{0}$, and we complete the proof.
\end{proof}

With Proposition \ref{pro:DM}, we can now derive the first and second variation formula for $Q(\cdot)$ at a disk.
\begin{proposition}
	\label{pro:DQ}
	Let $F_{t}$ be a flow map generated by a smooth vector field $\eta$ which preserves the volume of the unit disk $B_{1}$ along the flow. Let $\Omega_{t}=F_{t}(B_{1})$. Then,
	\begin{align}
		\label{eq:d1Q}
		\frac{d}{dt}\Big|_{t=0}Q(\Omega_{t})=0
	\end{align}
	and 
	\begin{align}
		\label{eq:d2Q}
		\frac{d^{2}}{dt^{2}}\Big|_{t=0} Q(\Omega_{t})=\frac{16}{\pi} \Big(\int_{B_{1}}\det(D^{2}h)+2|\nabla h|^{2}dx-2\int_{\partial B_{1}}h^{2}d\sigma\Big),	\end{align}
	where $h$ is the harmonic function satisfying the boundary condition:
	$$h+\nabla u_{B_{1}}\cdot \eta=0, \quad \mbox{on $\partial B_{1}$}.$$
\end{proposition}
\begin{proof}
	By \eqref{revisedQfunctional}, directly calculations yields
	\begin{align*}
		\frac{d}{dt}\Big|_{t=0}Q(\Omega_{t})=|\Omega_{t}|\frac{T(B_{1})D'(\Omega_{t})\big|_{t=0}-D(B_{1})T'(\Omega_{t})\big|_{t=0}}{\pi T(B_{1})^{2}}=0
	\end{align*}
	and 
	\begin{align}
		\label{eq:d2Q}
		\frac{d^{2}}{dt^{2}}\Big|_{t=0}Q(\Omega_{t})=\frac{1}{\pi}\frac{|B_{1}|}{T(B_{1})^{2}}\Big(D''(\Omega_{t})\big|_{t=0}T(B_{1})-T''(\Omega_{t})\big|_{t=0}D(B_{1})\Big).
	\end{align}
	We note that $u_{B_{1}}(x)=\frac{1}{4}(1-|x|^{2})$, thus we get
	\begin{align*}
		T(B_{1})=\frac{\pi}{8}, \qquad\mbox{and}\qquad D(B_{1})=\frac{\pi}{4}.
	\end{align*}
	It is well known that
	\begin{align*}
		\frac{d}{dt}\Big|_{t=0}T(\Omega_{t})=0\quad\mbox{and}\quad \frac{d^{2}}{dt^{2}}\Big|_{t=0}T(\Omega_{t})=2\int_{\partial B_{1}}h^{2}d\sigma -2\int_{B_{1}}|\nabla h|^{2}dx,
	\end{align*}
	where $h$ is the harmonic function in $B_1$ with $h+\nabla u_{B_1}\cdot \eta=0$ on $\partial B_1$. From this and Proposition \ref{pro:DM}, we have
	\begin{align*}
		\frac{d^{2}}{dt^{2}}\Big|_{t=0}Q(\Omega_{t})&=\frac{|B_{1}|}{8T(B_{1})^{2}}\Big(D''(\Omega_{t})\big|_{t=0}-2T''(\Omega_{t})\big|_{t=0}\Big)\\
		&=\frac{16}{\pi}\Big(\int_{B_{1}}\det(D^{2}h)dx-2\int_{\partial B_{1}}h^{2}d\sigma+2\int_{B_{1}}|\nabla h|^{2}dx\Big).
	\end{align*}
\end{proof}
Since $\eta$ is volume preserving, by \eqref{volumepreserving} and the boundary condition of $h$, $h$ has mean zero over $\partial B_1$. Thus to conclude Theorem \ref{thm:main}, it remains to prove the following:
\begin{lemma}
	\label{lem:harmonic_bmz}
	Let $h$ be a harmonic function in the unit disk $B_1$ with $\int_{\partial B_{1}}h d\sigma=0$. Then
	\begin{align}
		\label{eq:estimate}
		\int_{B_{1}}\det(D^{2}h)dx+2\int_{B_{1}}|\nabla h|^{2}dx-2\int_{\partial B_{1}}h^{2}d\sigma\leq 0
	\end{align}
	and the equality holds if and only if $h$ is a linear map.
\end{lemma}

\begin{proof}
	We firstly transform the integral in $B_{1}$ into the boundary $\partial B_{1}$.
	\begin{align*}
		&\int_{B_{1}}\det(D^{2}h)dx+2\int_{B_{1}}|\nabla h|^{2}dx-2\int_{\partial B_{1}}h^{2}d\sigma\\
		=&\int_{B_{1}}\frac{1}{2}\Big((\Delta h)^{2}-|\nabla^{2}h|^{2}\Big)dx +2\int_{B_{1}}|\nabla h|^{2}dx-2\int_{\partial B_{1}}h^{2}d\sigma\\
		=&\int_{B_{1}}-\frac{1}{4}\Delta(|\nabla h|^{2})+2\int_{B_{1}}|\nabla h|^{2}dx-2\int_{\partial B_{1}}h^{2}d\sigma\\
		=&-\int_{\partial B_{1}}\Big(\frac{1}{4}\partial_{\nu}(|\nabla h|^{2})-2h\partial_{\nu}h+2h^{2}\Big)d\sigma.
	\end{align*}
	
	In polar coordinates, $h(r,\theta)$ is a harmonic function and $\int_{0}^{2\pi}h(1,\theta)d\theta=0$. Therefore, the Fourier series of $h(1,\theta)$ is 
	\begin{align*}
		h(1,\theta)=\sum_{n\geq 1}a_{n}\cos n\theta+\sum_{n\geq 1}b_{n}\sin n\theta.
	\end{align*}
	Then, $h$ coincide with its harmonic extension 
	\begin{align*}
		h(r,\theta)=\sum_{n\geq 1}r^{n}(a_{n}\cos n\theta+b_{n}\sin n\theta).
	\end{align*}
	
	A direct calculation yields
	\begin{align*}
		h_{r}(1,\theta)&=\sum_{n\geq 1}na_{n}\cos n\theta+\sum_{n\geq 1}nb_{n}\sin n\theta\\
		h_{\theta}(1,\theta)&=\sum_{n\geq 1}-na_{n}\sin n\theta+\sum_{n\geq 1}n b_{n}\cos n\theta\\
		h_{rr}(1,\theta)&=\sum_{n\geq 1}n(n-1)a_{n}\cos n\theta+\sum_{n\geq 1}n(n-1)b_{n}\sin n\theta\\
		h_{r\theta}(1,\theta)&=\sum_{n\geq1}-n^{2}a_{n}\sin n\theta+\sum_{n\geq 1}n^{2}b_{n}\cos n\theta.
	\end{align*}
	
	Since $\partial_{\nu}=\partial_{r}$ on $\partial B_{1}$, we have
	\begin{align*}
		&\int_{B_{1}}\det(D^{2}h)dx+2\int_{B_{1}}|\nabla h|^{2}dx-2\int_{\partial B_{1}}h^{2}d\sigma\\
		=&-\int_{\partial B_{1}}\Big(\frac{1}{4}\partial_{\nu}(|\nabla h|^{2})-2h\partial_{\nu}h+2h^{2}\Big)d\sigma\\
		=&\int_{0}^{2\pi}-\frac{1}{4}\partial_{r}\Big((h_{r})^{2}+r^{-2}(h_{\theta})^{2}\Big)+2h h_{r}-2h^{2}d\theta\\
		=&-\pi\sum_{n\geq 1}(n^{2}-2)(n-1)(a_{n}^{2}+b_{n}^{2})\leq 0.
	\end{align*}
	From the last equality above, we know that \eqref{eq:estimate} holds equality if and only if $$h(r,\theta)=ar\cos \theta+br\sin \theta,$$
	where $a, b\in \R$ are constants. In this case, $h$ is linear in $B_1$.
\end{proof}
	
Note that from the boundary condition of $h$ in Proposition \ref{pro:DQ}, once $h$ is linear, $F_t$ is a local translation map of $B_1$. Hence combining Proposition \ref{pro:DM}, Proposition \ref{pro:DQ} and Lemma \ref{lem:harmonic_bmz}, we immediately conclude Theorem \ref{thm:main}.

\vskip 0.2cm
Summarizing the extremal properties of $Q(\cdot)$ obtained in this section, as well as some numerical results, we make the following conjecture. 
\begin{conjecture}
	Among all bounded smooth convex planar domains, the supremum of $Q(\cdot)$ is attained only at disks.
\end{conjecture}

Last, we have also computed the derivative formula for $|\cdot|\lambda_1(\cdot)$ in two dimensions along the curve shortening flow, and similar monotonicity question motivates another conjecture of us:
\begin{conjecture}
	Among all bounded smooth convex planar domains, the supremum of the following functional
	\begin{align}
		E(\Omega):=\frac{|\Omega|\int_{\Omega} det(D^2 v)\, dx}{\pi \lambda_1(\Omega)}
	\end{align}
	is attained only at disks. In the above, $v$ is the first eigenfunction of the Dirichlet Laplacian on $\Omega$, with $\int_{\Omega} v^2 \, dx=1$.
\end{conjecture}

	\section{Monotonicity on rectangles via flow approach}

	We first prove Proposition \ref{not-equidistribution-thmintro}, which is crucial in the sign analysis in the proof of Theorem \ref{recderipos}. Even though by separation of variables, there is a series expression of the torsion function on rectangles, it is rather complicated to draw the conclusion this way. Instead, our proof will be based on the reflection argument and symmetry observation.
	
	\begin{proof}[Proof of Proposition \ref{not-equidistribution-thmintro}]
		Without loss of generality, we assume that $A=(0,0)$, $B=(a,0)$, $C=(a,b)$ and $D=(0,b)$, with $a>b$. Let $E$ be the point of the intersection of the line $l: \, y=x$ and the side $CD$. Let $M$ be the midpoint of $AD$, $N$ be the midpoint of $AB$ and $M'\in AN$ with $|AM'|=|AM|$. See Figure \ref{fig-rectangle-4} below. 
		
		\begin{figure}[htp]
			\centering
			\begin{tikzpicture}[scale = 2]
				\pgfmathsetmacro\Ls{2};
				\pgfmathsetmacro\hs{1};
				\pgfmathsetmacro\ts{0.3};
				\pgfmathsetmacro\hts{\hs*(1+\ts)};
				
				\fill (0, 0) circle (0.02 ) node[below] {\small $A$};
				\fill (0, 1) circle (0.02 ) node[left] {\small$D$};
				\fill (0, 0.3) circle (0.02 ) node[left] {\small $P$};
				\fill (0.3, 0) circle (0.02 ) node[below] {\small $P'$};
				\fill (1.5,0) circle (0.02) node[below] {\small$B$};
				\fill (1.5, 1) circle (0.02) node[right] {\small$C$};
				\fill (0,0.5) circle (0.02) node[left]{\small $M$};
				\fill (0.5,0) circle (0.02) node[below]{\small $M'$};
				\fill (1,1) circle (0.02) node[above]{\small $E$};
				\fill (0.75,0) circle (0.02) node[below]{\small $N$};

				\draw[dashed] (0,0)--(1.3,1.3);
				
				\fill (1.3,1.3) node[right]{\small $l$};
				
				\draw[->] (-0.2,0)--(1.8,0) node[right] {$x$};
				\draw[->](0,-0.2)--(0,1.2) node[right] {$y$};
				\draw (0,1)--(1.5,1);
				\draw  (1.5,0)--(1.5,1);
				
			\end{tikzpicture}
			\caption{Picture illustration of the proof of Proposition \ref{not-equidistribution-thmintro}}
			\label{fig-rectangle-4}
		\end{figure}
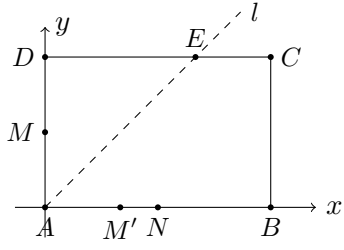
		
		Since the reflection of $DE$ about the line $l$ strictly lies inside the rectangle $\square ABCD$, for any $P\in AM$, $P'\in AM'$ with $|AP|=|AP'|$, similar to the proof of Theorem \ref{symmetrycomparison}, we have $|\nabla u(P)|<|\nabla u(P')|$, see also \cite[Theorem 5.1]{LY24} where a full proof of this fact is presented. Hence
		\begin{align*}
			\frac{1}{|AM|}\int_{AM}|\nabla u|^2\, ds<\frac{1}{|AM'|}\int_{AM'}|\nabla u|^2\, ds.
		\end{align*}
		Also, by reflection argument (similar to the argument in the proof of \cite[Proposition 3.2]{4L24}), $|\nabla u|$ is strictly increasing along the segment $AN$ from $A$ to $N$. Hence
		\begin{align*}
			\frac{1}{|AM'|}\int_{AM'}|\nabla u|^2\, ds<\frac{1}{|M'N|}\int_{M'N}|\nabla u|^2\, ds.
		\end{align*}
		Therefore,
		\begin{align*}
			\frac{1}{|AN|}\int_{AN}|\nabla u|^2\, ds=&\frac{1}{|AM'|+|M'N|}\left(\int_{AM'}|\nabla u|^2\, ds+\int_{M'N}|\nabla u|^2\, ds\right)\\
			>&\frac{1}{|AM|}\int_{AM}|\nabla u|^2\, ds.
		\end{align*}
		By symmetry, 
		\begin{align*}
			\frac{1}{|AN|}\int_{AN}|\nabla u|^2\, ds= \frac{1}{|AB|}\int_{AB}|\nabla u|^2\, ds,\quad  \frac{1}{|AM|}\int_{AM}|\nabla u|^2\, ds= \frac{1}{|AD|}\int_{AD}|\nabla u|^2\, ds.
		\end{align*}
		Hence \eqref{rectangle-neumann-data} is obtained.
	\end{proof}

	Now we are ready to prove Theorem \ref{recderipos}. The method is also motivated by the stretching flow argument in the proof of Theorem \ref{yangsheng2}.
	
	\begin{proof}[Proof of Theorem \ref{recderipos}]
		Let 
		\begin{align*}
			\eta(t,x,y)=\left(\frac{x}{1+t},0\right),\quad F_t(x,y)=\left((1+t)x,y\right).
		\end{align*}
		Then $F_t(R)=R_t$ and 
		\begin{align*}
			\partial_t F_t(x,y)=\eta(t,F(x,y)),\quad F_0(x,y)=(x,y).
		\end{align*}
		
		Without loss of generality, we assume that $b=1$. By \eqref{boundarytorsion}, Theorem \ref{Tshapederivative}, and the fact that $\eta\cdot \nu=0$ on $\partial R_t\setminus B_tC_t$ and $(x,y)\cdot \nu=0$ on $AB_t\cup AD$, we have
		\begin{align*}
			&\frac{d}{dt}\left(\frac{T(R_t)}{|R_t|^2}\right)=\frac{d}{dt}\left(\frac{T(R_t)}{(1+t)^2}\right)=\frac{\int_{B_tC_t}|\nabla u(t)|^2 \left(\frac{x}{1+t},0\right)\cdot (1,0) \, ds}{(1+t)^2}\\
			&-\frac{2}{(1+t)^3}\frac{1}{4}\left(\int_{B_tC_t}|\nabla u(t)(x,y)|^2(x,y)\cdot (1,0)\, ds+\int_{C_tD}|\nabla u(t)(x,y)|^2 (x,y)\cdot (0,1)\, ds\right)\\
			=&\frac{1}{2}\frac{1}{(1+t)^2}\left(\int_{B_tC_t}|\nabla u(t)|^2\, ds-\frac{1}{1+t}\int_{C_tD}|\nabla u(t)|^2\, ds\right),\\
			&\mbox{since on $B_tC_t, \, x=1+t$, and on $C_tD,\, y=1$,}\\
			=&\frac{1}{2}\frac{1}{(1+t)^2}\left(\frac{1}{|B_tC_t|}\int_{B_tC_t}|\nabla u(t)|^2\, ds-\frac{1}{|C_tD|}\int_{C_tD}|\nabla u(t)|^2\, ds\right).
		\end{align*}
		By Proposition \ref{not-equidistribution-thmintro}, $\tfrac{d}{dt}(T(R_t)/|R_t|^2)>0$ for $t>0$.   
	\end{proof}
	
	To conclude this section, motivated by Proposition \ref{not-equidistribution-thmintro}, we raise the following partially over-determined question, which remains open to us: 
	\begin{question}
		\label{harderrigidity}
		Let $Q$ be a quadrilateral with vertices $A_1, A_2,\cdots, A_4$. Let $A_{5}=A_1$ and $u$ be the torsion function over $Q$. If
		\begin{align*}
			\frac{1}{|A_iA_{i+1}|}\int_{A_iA_{i+1}}|\nabla u|^2\, ds\equiv \frac{1}{|\partial Q|}\int_{\partial Q}|\nabla u|^2\, ds, \quad \forall i=1,2,3,4,
		\end{align*}
	then is it true that $Q$ must be of a kite shape?
	\end{question}
	One can actually also consider similar problems for general polygons with $N$ sides, $N\ge 5$. We think that such problems are worth exploring.
	
	\section{An alternative proof of Proposition \ref{rectangle-thm} via continuous symmetrization}
	We first recall the well-known Steiner symmetrization (see for example \cite[Chapter 2]{Henrot}), which is a powerful tool to solve geometric optimization problems.
	
	\begin{definition}[Steiner symmetrization]
		\label{def:Steiner symmetrization}
		Let $\Omega\subset\R^{2}$ be a measurable set, fix a line $l$ in $\R^{2}$ and denote $\tau$ the unit normal to $l$. Let $\Omega_{x}:=\{x+t\tau\in \Omega:t\in \R\}$ be the cross-section of $\Omega$ passing through $x$. Let $\Omega^{*}_{x}:=\{x+s\tau:|s|<\mathcal{H}^{1}(\Omega_{x})/2\}$ be the line segment centered at $x$ and perpendicular to $l$. We call the domain $\Omega^{*}:=\cup_{x\in l}\Omega^{*}_{x}$ the Steiner symmetrization of $\Omega$ with respect to the line $l$.
	\end{definition}
	
	Using the notation from Definition \ref{def:Steiner symmetrization}, the continuous Steiner symmetrization of a convex planar domain $\Omega$ with respect to $l$ can be defined by  $$\Omega^{t}:=\cup_{x\in l}\{z^{t}+s\nu: z^{t}=tx+(1-t)M(x), |s|<\mathcal{H}^{1}(\Omega_{x})/2\}.$$ Here $M(x)$ is the midpoint of $\Omega_{x}$. Intuitively, the continuous Steiner symmetrization $\Omega^{t}$ is a domain formed by transforming each line segment $\Omega(x)$ in the direction $\tau$ until its midpoint $M(x)$ coincides with the point $z^{t}$. Figure \ref{fig-rectangle-0} illustrates the action of continuous Steiner symmetrization on triangles.
	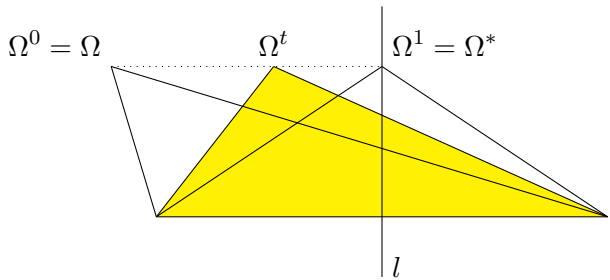
\begin{figure}[htp]
		\centering
		\begin{tikzpicture}[scale = 2]
			\pgfmathsetmacro\L{1.5};
			\pgfmathsetmacro\h{1};
			\pgfmathsetmacro\t{0.4};
			\pgfmathsetmacro\x{-\L*1.2};
			\pgfmathsetmacro\xt{\t*\x};
			
			\fill[yellow,opacity=0.5] (-\L,0)--(\L,0)--(\xt,\h)--cycle;
			\draw (-\L,0)--(\L,0);
			\draw (\L,0)--(\xt,\h);
			\draw (\xt,\h)--(-\L,0);
			\node[above] at (\xt, \h) {$\Omega^{t}$};
			
			\draw (0,-0.4*\h)--(0,1.4*\h);
			\node[below right] at (0, -0.2*\h) {$l$};
			\draw[dotted] (\x,\h)--(0,\h);
			
			\draw (-\L,0)--(\x,\h);
			\draw (\x,\h)--(\L,0);
			\node[above left] at (\x, \h) {$\Omega^{0}=\Omega$};

			\draw (-\L,0)--(0,\h);
			\draw (0,\h)--(\L,0);
			\node[above right] at (0, \h) {$\Omega^{1}=\Omega^{*}$};
		\end{tikzpicture}
		\caption{Continuous Steiner symmetrization}
		\label{fig-rectangle-0}
	\end{figure}

	The continuous Steiner symmetrization, which is a generalization of the Steiner symmetrization, was first introduced by P\'{o}lya and Szeg\"{o}\cite{PS51}, dealing with the monotonicity of shape functional under continuous deformations. Different versions of it have been studied by Solynin\cite{Solynin92} and Brock\cite{Brock95,Brock00}. The difference in cases of convex domains is only in the way of defining "continuity parameters". The version studied in \cite{Brock95,Brock00} is more general and it works even for non-connected domains. We refer the readers to these papers and to the excellent book \cite{Henrot} and the recent paper \cite{Solynin20} for the properties of this transformation.

	An important property of the continuous Steiner symmetrization is that the torsional rigidity of $\Omega^{t}$ increases with respect to $t$ (see \cite{Brock95} or the classic monograph \cite{PS51}). With this fact, we present another proof of Proposition \ref{rectangle-thm}.

	\begin{proof}[Proof of Proposition \ref{rectangle-thm}]
		Let $\varphi(s)=T(R(s))$.
		For $s\in (0,1)$, we let the vertices of $R(s)$ be denoted by $A,B,C,D$, where $|AB|=s$ and $|AD|=s^{-1}$.

		In the first step, we apply continuous Steiner symmetrization(CSS) to $R(s)$ with respect to the line $l_{1}$, which is the perpendicular bisector of the diagonal $BD$. This process generates a family of parallelograms $\{R(s)^{t}\}_{t\in[0,1]}$ with vertices $\{A_{t}, B_{t}, C_{t},D_{t}\}$, where $B_{t}=B$ and $D_{t}=D$. See Figure \ref{fig-rectangle-1}. 
		
		\begin{figure}[htp]
			\centering
			\begin{tikzpicture}[scale = 2]
				\pgfmathsetmacro\xlength{1.5};
				\pgfmathsetmacro\ylength{2};
				\pgfmathsetmacro\Mx{1/2*\xlength};
				\pgfmathsetmacro\My{1/2*\ylength};
				\pgfmathsetmacro\t{0.5};

				\fill (0, 0) circle (0.02) node[below left] {\small ${A}$};
				\fill (\xlength, 0) circle (0.02) node[below right] {\small ${B}$};
				\fill (\xlength, \ylength) circle (0.02) node[above right] {\small ${C}$};
				\fill (0, \ylength) circle (0.02) node[above left] {\small ${D}$};
				
				\draw[thick] (0,0)--(\xlength,0);
				\draw[thick] (\xlength,0)--(\xlength,\ylength);
				\draw[thick] (\xlength,\ylength)--(0,\ylength);
				\draw[thick] (0,0)--(0,\ylength);
				
				\draw[dashed]  (0,\ylength)-- (\xlength,0);
				\draw[dashed]  (\xlength,\ylength)-- (0,0);
				\draw[dashed] (\Mx+0.8*\ylength, \My+0.8*\xlength)--(\Mx-0.8*\ylength, \My-0.8*\xlength);
				\fill (\Mx+0.8*\ylength, \My+0.8*\xlength) circle (0.02) node[below right] {\small ${l_{1}}$};
				
				
				\pgfmathsetmacro\diagonal{(\xlength^(2)+\ylength^(2))^(1/2)};
				\pgfmathsetmacro\rAx{-\t*\xlength/\diagonal*(1/2*\diagonal-\xlength^(2)/\diagonal)};
				\pgfmathsetmacro\rAy{\t*\ylength/\diagonal*(1/2*\diagonal-\xlength^(2)/\diagonal)};
				\pgfmathsetmacro\rCx{\xlength+\t*\xlength/\diagonal*(1/2*\diagonal-\xlength^(2)/\diagonal)};
				\pgfmathsetmacro\rCy{\ylength-\t*\ylength/\diagonal*(1/2*\diagonal-\xlength^(2)/\diagonal)};
				
				\draw[->,thick] (\Mx+0.8*\xlength, \My)--(\Mx+1.6*\xlength,\My);
				\pgfmathsetmacro\trans{\Mx+1.8*\xlength};
				\node[below] at (\Mx+1.2*\xlength, \My*0.8) {\small CSS};

				\draw[dashed] (0+\trans,0)--(\xlength+\trans,0);
				\draw[dashed] (\xlength+\trans,0)--(\xlength+\trans,\ylength);
				\draw[dashed] (\xlength+\trans,\ylength)--(0+\trans,\ylength);
				\draw[dashed] (0+\trans,0)--(0+\trans,\ylength);
				
				\draw[dashed] (\Mx+0.8*\ylength+\trans, \My+0.8*\xlength)--(\Mx-0.8*\ylength+\trans, \My-0.8*\xlength);
				\fill (\Mx+0.8*\ylength+\trans, \My+0.8*\xlength) circle (0.02) node[below right] {\small ${l_{1}}$};

				\pgfmathsetmacro\diagonal{(\xlength^(2)+\ylength^(2))^(1/2)};
				\pgfmathsetmacro\rAx{-\t*\xlength/\diagonal*(1/2*\diagonal-\xlength^(2)/\diagonal)};
				\pgfmathsetmacro\rAy{\t*\ylength/\diagonal*(1/2*\diagonal-\xlength^(2)/\diagonal)};
				\pgfmathsetmacro\rCx{\xlength+\t*\xlength/\diagonal*(1/2*\diagonal-\xlength^(2)/\diagonal)};
				\pgfmathsetmacro\rCy{\ylength-\t*\ylength/\diagonal*(1/2*\diagonal-\xlength^(2)/\diagonal)};
				
				\fill (\rAx+\trans, \rAy) circle (0.02) node[below left] {\small ${A_{t}}$};
				\fill (\xlength+\trans, 0) circle (0.02) node[below right] {\small ${B_{t}}$};
				\fill (\rCx+\trans, \rCy) circle (0.02) node[above right] {\small ${C_{t}}$};
				\fill (0+\trans, \ylength) circle (0.02) node[above left] {\small ${D_{t}}$};
				
				\draw[thick] (\rAx+\trans, \rAy)--(\xlength+\trans, 0);
				\draw[thick] (\xlength+\trans, 0)--(\rCx+\trans, \rCy);
				\draw[thick] (\rCx+\trans, \rCy)--(0+\trans, \ylength);
				\draw[thick] (0+\trans, \ylength)--(\rAx+\trans, \rAy);
				
				\draw[dashed] (0+\trans, \ylength)--(\xlength+\trans, 0);
				\draw[dashed] (\rAx+\trans, \rAy)--(0+\trans,0);
				\draw[dashed] (\rCx+\trans, \rCy)--(\xlength+\trans,\ylength);

			\end{tikzpicture}
			\caption{Step 1}
			\label{fig-rectangle-1}
		\end{figure}
		
		In the second step, we apply Steiner symmetrization (SS) to $R(s)^{t}$ with respect to the line $l_{2}$, which is perpendicular to the side $A_{t}B_{t}$. The resulting shape is a rectangle $\hat{R}$ with vertices $\hat{A},\hat{B},\hat{C},\hat{D}$. See Figure \ref{fig-rectangle-2}. 
		\begin{figure}[htp]
			\centering
			\begin{tikzpicture}[scale = 2]
				\pgfmathsetmacro\xlength{1.5};
				\pgfmathsetmacro\ylength{2};
				\pgfmathsetmacro\Mx{1/2*\xlength};
				\pgfmathsetmacro\My{1/2*\ylength};
				\pgfmathsetmacro\t{0.5};
				
				\pgfmathsetmacro\diagonal{(\xlength^(2)+\ylength^(2))^(1/2)};
				\pgfmathsetmacro\rAx{-\t*\xlength/\diagonal*(1/2*\diagonal-\xlength^(2)/\diagonal)};
				\pgfmathsetmacro\rAy{\t*\ylength/\diagonal*(1/2*\diagonal-\xlength^(2)/\diagonal)};
				\pgfmathsetmacro\rCx{\xlength+\t*\xlength/\diagonal*(1/2*\diagonal-\xlength^(2)/\diagonal)};
				\pgfmathsetmacro\rCy{\ylength-\t*\ylength/\diagonal*(1/2*\diagonal-\xlength^(2)/\diagonal)};
				
				\pgfmathsetmacro\diagonal{(\xlength^(2)+\ylength^(2))^(1/2)};
				\pgfmathsetmacro\rAx{-\t*\xlength/\diagonal*(1/2*\diagonal-\xlength^(2)/\diagonal)};
				\pgfmathsetmacro\rAy{\t*\ylength/\diagonal*(1/2*\diagonal-\xlength^(2)/\diagonal)};
				\pgfmathsetmacro\rCx{\xlength+\t*\xlength/\diagonal*(1/2*\diagonal-\xlength^(2)/\diagonal)};
				\pgfmathsetmacro\rCy{\ylength-\t*\ylength/\diagonal*(1/2*\diagonal-\xlength^(2)/\diagonal)};
				
				\fill (\rAx, \rAy) circle (0.02) node[below left] {\small ${A_{t}}$};
				\fill (\xlength, 0) circle (0.02) node[below right] {\small ${B_{t}}$};
				\fill (\rCx, \rCy) circle (0.02) node[above right] {\small ${C_{t}}$};
				\fill (0, \ylength) circle (0.02) node[above left] {\small ${D_{t}}$};
				
				\draw[thick] (\rAx, \rAy)--(\xlength, 0);
				\draw[thick] (\xlength, 0)--(\rCx, \rCy);
				\draw[thick] (\rCx, \rCy)--(0, \ylength);
				\draw[thick] (0, \ylength)--(\rAx, \rAy);
				
				\draw[->,thick] (\Mx+0.8*\xlength, \My)--(\Mx+1.6*\xlength,\My);
				\pgfmathsetmacro\trans{\Mx+1.8*\xlength};
				\node[below] at (\Mx+1.2*\xlength, \My*0.8) {\small SS};
				
				\draw[dashed] (\rAx+\trans,\rAy)--(\xlength+\trans,0);
				\draw[dashed] (\xlength+\trans, 0)--(\rCx+\trans, \rCy);
				\draw[dashed] (\rCx+\trans, \rCy)--(0+\trans, \ylength);
				\draw[dashed] (0+\trans, \ylength)--(\rAx+\trans, \rAy);
				
				\pgfmathsetmacro\AtBt{((\rAx-\xlength)^(2)+(\rAy)^(2))^(1/2)};
				\pgfmathsetmacro\taux{\rAy/\AtBt};
				\pgfmathsetmacro\tauy{(\xlength-\rAx)/\AtBt};
				\pgfmathsetmacro\width{\xlength*\ylength/\AtBt};
				
				\pgfmathsetmacro\Ahx{\rAx+\trans};
				\pgfmathsetmacro\Ahy{\rAy};
				\pgfmathsetmacro\Bhx{\xlength+\trans};
				\pgfmathsetmacro\Bhy{0};
				\pgfmathsetmacro\Chx{\Bhx+\width*\taux};
				\pgfmathsetmacro\Chy{\Bhy+\width*\tauy};
				\pgfmathsetmacro\Dhx{\rAx+\width*\taux+\trans};
				\pgfmathsetmacro\Dhy{\rAy+\width*\tauy};

				\fill (\Ahx, \Ahy) circle (0.02) node[below left] {\small ${\hat{A}}$};
				\fill (\Bhx,\Bhy) circle (0.02) node[below right] {\small ${\hat{B}}$};
				\fill (\Chx, \Chy) circle (0.02) node[above right] {\small ${\hat{C}}$};
				\fill (\Dhx, \Dhy) circle (0.02) node[above left] {\small ${\hat{D}}$};

				\draw[thick] (\Ahx, \Ahy)--(\Bhx,\Bhy);
				\draw[thick] (\Bhx,\Bhy)--(\Chx,\Chy);
				\draw[thick] (\Dhx,\Dhy)--(\Chx,\Chy);
				\draw[thick] (\Dhx,\Dhy)--(\Ahx,\Ahy);
				
				\draw[dashed] (\rAx+1.3*\width*\taux, \rAy+1.3*\width*\tauy)--(\rAx-0.3*\width*\taux,\rAy-0.3*\width*\tauy);
				\fill (\rAx+1.3*\width*\taux, \rAy+1.3*\width*\tauy) circle (0.02) node[above left] {\small ${l_{2}}$};
				\draw[dashed] (\rAx+1.3*\width*\taux+\trans, \rAy+1.3*\width*\tauy)--(\rAx-0.3*\width*\taux+\trans,\rAy-0.3*\width*\tauy);
				\fill (\rAx+1.3*\width*\taux+\trans, \rAy+1.3*\width*\tauy) circle (0.02) node[above left] {\small ${l_{2}}$};
				
			\end{tikzpicture}
			\caption{Step 2}
			\label{fig-rectangle-2}
		\end{figure}
		
		From the above process, after applying a rotation and transformation to $\hat{R}$, we obtain a rectangle $R(\xi(s;t))$, where $\xi(s;t)=|A_tB_t|$. In fact, using elementary geometry argument, we have 
		\begin{itemize}
			\item[(1)] $|CC_{t}|=t\cdot \lambda(s)$, where $\lambda(s)=\tfrac{1}{2}\tfrac{1-s^4}{1+s^4}(s^{-2}+s^2)^{1/2}$ is the distance from $C$ to the line $l_1$.
			\item[(2)] $|A_{t}B_{t}|^{2}=t^{2}\lambda(s)^{2}+s^{2}+2t\lambda(s)s^{2}(s^{2}+s^{-2})^{-1/2}$.
			\item[(3)] $\xi(s;t)=|A_{t}B_{t}|$.
		\end{itemize}
		At each step, by properties of (CSS) and (SS), the torsional rigidity is strictly increasing. Therefore, we have 
		\begin{align*}
			\varphi(\xi(s;t))>\varphi(\xi(s;0)),\quad \forall t\in[0,1].
		\end{align*}
		Note that $\xi(s;t)$ is a strictly increasing smooth function with respect to $t$, with $\xi(s;0)=s$ and $s<\xi(s;1)<1$. Hence $\varphi(s)$ is strictly increasing in the interval $(\xi(s;0),\xi(s;1))$. Since $s\in (0,1)$ is arbitrary, $\varphi(s)$ is strictly increasing with respect to $s\in(0,1)$.
	\end{proof}
	
	\section{Further remarks on Question \ref{kq1}}
	
	First, motivated by Question \ref{kq1}, we observe the monotonicity property of the isoperimetric ratio along the inverse mean curvature flow (IMCF), and therefore the classical Euclidean isoperimetric can also be proved by the inverse mean curvature flow. 
	
	Recall that the inverse mean curvature flow is a one-parameter family of closed, embedded hypersurfaces \( M_t \subset \mathbb{R}^{n+1} \) (or in a Riemannian manifold) 
	satisfying the evolution equation
	\[
	\frac{\partial}{\partial t} x = \frac{1}{H} \, \nu, \quad t \in [0, \infty),
	\]
	where \( x \) is the position vector of \( M_t \), \( H > 0 \) is the mean curvature, and \( \nu \) is the outward unit normal vector field. 
	This flow expands the hypersurface in the direction of its normal with speed inversely 
	proportional to its mean curvature.
	
	\begin{proposition}
		\label{IMCFproof}
		Let $\Omega$ be a smooth strictly star-shaped mean convex domain, and let $\Omega_t$ be the domain along the inverse mean curvature flow at a time $t$. Then,
		\begin{align*}
			\frac{d}{dt}\left(\frac{P^{\frac{n}{n-1}}(\Omega_t)}{|\Omega_t|}\right)\le 0.
		\end{align*}
	\end{proposition}
	
	\begin{proof}
		By \cite{Ger90}, the strict star-shaped mean convex property is preserved along IMCF. By direct computation, we have
		\begin{align*}
			\frac{d}{dt}\frac{P^{\frac{n}{n-1}}(\Omega_t)}{|\Omega_t|}=& \frac{1}{|\Omega_t|^2} \left(\left(\frac{n}{n-1}P^{\frac{1}{n-1}}(\Omega_t) \int_{\partial \Omega_t}H\cdot\frac{1}{H}\, d\sigma_t\right)|\Omega_t|-P^{\frac{n}{n-1}}(\Omega_t) \int_{\partial \Omega_t}\frac{1}{H}\, d\sigma_t\right) \\
			=&\frac{P^{\frac{n}{n-1}}(\Omega_t)}{|\Omega_t|^2} \cdot \left(\frac{n}{n-1}|\Omega_t|-\int_{\partial\Omega_t}\frac{1}{H}\, d\sigma\right)\le 0,
		\end{align*}
		where the last inequality is due to Ros, see \cite{Ros}.
	\end{proof}
	The conclusion is still valid if the initial surface is smooth outward minimizing (i.e., variational mean convex), due to Husiken and Ilmanen \cite{HI01}, where the terminology outward minimizing is also explained there.

	For the torsional rigidity, Question \ref{kq1} motivates us to consider the following flow given by 
	\begin{align}
		\label{nonlocal}
		\frac{\partial X}{\partial t}(t,x)=-\frac{1}{|\nabla u(t)(X(t,x))|^2} (X(t,x)\cdot \nu)\nu,\quad X(0,x)=x,
	\end{align}
	where $\nu$ is the unit outer normal, $X(t,x)$ is the position vector and $u(t)$ is the torsion function over the domain enclosed by $X(t,\partial \Omega)$, denoted by $\Omega_t$。
	
	\begin{proposition}
		\label{newzhenxi}
		Let $\Omega$ be a smooth convex domain, $X(t,x)$ be the flow given by \eqref{nonlocal} and suppose that $\Omega_t$ is convex for any $t\in [0,T)$, then $T(\Omega_t)/|\Omega_t|^{(n+2)/n}$ is monotonically increasing for $t\in [0,T)$.	\end{proposition}
	
	\begin{proof}
		Let $\zeta$ be the normal speed of $X(t,\cdot)$. That is, $\zeta \in C^\infty(\partial \Omega_t)$ with $\zeta(z)=X(t,z)\cdot \nu(z)$, for any $z\in \partial \Omega_t$. Then by direct computation,
		\begin{align*}
			\frac{d}{dt}\left(\frac{T(\Omega_t)}{|\Omega_t|^{(n+2)/n}}\right)\ge 0
		\end{align*} is equivalent to 
		\begin{align*}
			\left(\int_{\partial \Omega_t}|\nabla u(t)|^2 \zeta \, d\sigma\right)|\Omega_t| \ge \frac{n+2}{n}T(\Omega_t)\int_{\partial \Omega_t}\zeta \, d\sigma.
		\end{align*}
		By \eqref{boundarytorsion}, the above inequality is equivalent to 
		\begin{align*}
			\left(\int_{\partial \Omega_t}|\nabla u(t)|^2 \zeta \, d\sigma\right)|\Omega_t| \ge\frac{1}{n}\left(\int_{\partial \Omega_t}|\nabla u(t)|^2\, (x\cdot \nu)\, d\sigma\right)\int_{\partial \Omega_t}\zeta \, d\sigma.
		\end{align*}
		Plugging 
		\begin{align*}
			\zeta=-\frac{1}{|\nabla u(t)|^2} (x\cdot \nu),
		\end{align*}and since 
		\begin{align*}
			|\Omega_t|=\frac{1}{n}\int_{\partial \Omega_t}(x\cdot \nu)\, d\sigma,
		\end{align*}
		the above is equivalent to
		\begin{align*}
			\left(\int_{\partial \Omega_t} (x\cdot \nu)\, d\sigma\right)^2 \le \left(\int_{\partial \Omega_t} |\nabla u(t)|^2(x\cdot \nu)\, d\sigma\right)\left(\int_{\partial \Omega_t} \frac{1}{|\nabla u(t)|^2}(x\cdot \nu)\, d\sigma\right).
		\end{align*}
		This is indeed valid, applying the Cauchy-Schwarz inequality with respect to the measure $d\mu=(x\cdot\nu)d\sigma$. Note that for a smooth convex hypersurface, $x\cdot\nu \ge0$, and thus $d\mu$ is a measure. 
		
	\end{proof}
	
	We have the following conjecture: 
	\begin{conjecture}
		\label{nonlocalflow}
		Let $\Omega$ be a bounded smooth convex domain in $\mathbb{R}^n$. Then, $X(t,\partial\Omega)$ converges to a round point in $C^\infty$ norm at a finite time $T$, and during the evolution, the convexity property is preserved.
	\end{conjecture}
	If the conjecture is validated, then in view of Proposition \ref{newzhenxi}, Question \ref{kq1} is answered under the configuration of the flow given by \eqref{nonlocal} within smooth convex frameworks.
	
	The construction of \eqref{nonlocal} is also motivated by the work of Guan-Li \cite{GL15}, where they prove the monotonicity of $P(\cdot)$ under the normalized mean curvature type flow preserving volume, given by 
	\begin{align*}
		\label{nonlocal}
		\frac{\partial X}{\partial t}=\left((n-1)-H (X\cdot \nu)\right)\nu.
	\end{align*}
	However, the flow \eqref{nonlocal} seems to be more difficult to handle, since $|\nabla u|$ is nonlocal over the boundary. Therefore, even disregarding Question \ref{kq1}, resolving Conjecture \ref{nonlocalflow} itself deserves further research.

	\vskip 0.3cm
	
	\noindent
	\textbf{Data Availibility Statement:} This study is a theoretical analysis, and no new data were created or analyzed. Therefore, data sharing is not applicable to this article.


\end{document}